\documentclass[review]{elsarticle}

\usepackage{lineno,hyperref,ulem}
\pdfoutput=1
\journal{Journal of \LaTeX\ Templates}
\setlength{\oddsidemargin}{0.1in} \setlength{\evensidemargin}{0in}
\setlength{\topmargin}{-0.5in}
\usepackage{natbib}
\usepackage{subfigure}
\usepackage{amssymb}
\usepackage{dsfont}
\usepackage{setspace}
\usepackage{bbm}
\usepackage{wrapfig}
\usepackage{algorithm}
\usepackage{algpseudocode}
\usepackage{epsfig,amssymb,latexsym,color,amsmath,pifont,colordvi,multicol}

\journal{Journal of Nonlinear Science}
\textwidth6.5in \textheight8.9in

\definecolor{backgrey}{rgb}{0.86,0.86,0.86}
\definecolor{dblue}{rgb}{0,0.0,0.5}
\definecolor{dred}{rgb}{0.4,0.2,0}
\definecolor{dgreen}{rgb}{0.0,0.5,0}

 


\newcommand{\captionfonts}{\small}
\makeatletter  
\long\def\@makecaption#1#2{%
  \vskip\abovecaptionskip
  \sbox\@tempboxa{{\captionfonts #1: #2}}%
  \ifdim \wd\@tempboxa >\hsize
    {\captionfonts #1: #2\par}
  \else
    \hbox to\hsize{\hfil\box\@tempboxa\hfil}%
  \fi
  \vskip\belowcaptionskip}
\makeatother   
\newtheorem{theorem}{Theorem}

\newtheorem{remark}[theorem]{Remark}
\newtheorem{properties}[theorem]{Property}

\newtheorem{definition}[theorem]{Definition}

\newtheorem{proposition}[theorem]{Proposition}


\newenvironment{proof}[1][Proof]{\textbf{#1.} }{\ \hspace*{\fill} \rule{0.5em}{0.5em}}


\bibliographystyle{elsarticle-num}

\begin{document}

\begin{frontmatter}

\title{On Few Shot Learning of Dynamical Systems: A Koopman Operator Theoretic Approach}

\author{Suhbrajit Sinha*}\cortext[mycorrespondingauthor]{Corresponding author}
\ead{subhrajit.sinha@pnnl.gov}
%
%
\author{Umesh Vaidya}
\ead{uvaidya@clemson.edu}
\author{Enoch Yeung}
\ead{eyeung@ucsb.edu}

\fntext[fn1]{S. Sinha is with the Earth and Biological Science Directorate, Pacific Northwest National Laboratory
Richland, WA, 99354}
\fntext[fn2]{U. Vaidya is with the Department of Mechanical Engineering,
Clemson University,
Clemson, SC 29634}

\fntext[fn3]{E. Yeung is with the Department of Mechanical Engineering,
University of California Santa Barbara,
Santa Barbara, CA 93106}




\begin{abstract}
In this paper, we propose a novel algorithm for learning the Koopman operator of a dynamical system from a \textit{small} amount of training data. In many applications of data-driven modeling, e.g. biological network modeling, cybersecurity, modeling the Internet of Things, or smart grid monitoring, it is impossible to obtain regularly sampled time-series data with a sufficiently high sampling frequency.   In such situations the existing Dynamic Mode Decomposition (DMD) or Extended Dynamic Mode Decomposition (EDMD) algorithms for Koopman operator computation often leads to a low fidelity approximate Koopman operator. To this end, this paper proposes an algorithm which can compute the Koopman operator efficiently when the training data-set is sparsely sampled across time. In particular, the proposed algorithm enriches the small training data-set by appending artificial data points, which are treated as noisy observations. The larger, albeit noisy data-set is then used to compute the Koopman operator, using techniques from Robust Optimization. The efficacy of the proposed algorithm is also demonstrated on three different dynamical systems, namely a linear network of oscillators, a nonlinear system and a dynamical system governed by a Partial Differential Equation (PDE).
\end{abstract}

\begin{keyword}
Koopman operator \sep Time series \sep Dynamical system \sep Dynamical system learning \sep Sparse data
\end{keyword}

\end{frontmatter}


\section{Introduction}
Dynamical systems theory had started with the works of Newton \cite{principia} and since then has developed into a mature branch of mathematics and physics with applications to many different branches of science and engineering. Typically, dynamical systems are studied in two different ways. One way is to use techniques from differential geometry, where the evolution of the state is studied on the configuration manifold and the associated tangent and cotangent bundles \cite{marsden_mechanics_book}. The other way studies the evolution of functions of the state or measures of the state flow on the configuration manifold \cite{Lasota}. In particular, the evolution of functions or measures is governed by linear operators on appropriate spaces. Though this exposition leads to an infinite-dimensional operator, a big advantage is the fact that even if the underlying system is nonlinear, in the infinite-dimensional space, the evolution is linear \cite{Lasota}.

In recent years, with the advancements in computational capacity and availability of data, there has been a big drive towards data-driven analysis of systems. In particular, increase in memory, processing powers of computers and the advancement in distributed computing architectures have enabled us to handle and analyze data with increasing precision and address learning problems at an unprecedented scale. On the other hand dynamical systems theory finds application in many different disciplines like complex networks, power networks, biological systems, finance etc. and the advantage of data-driven analysis of dynamical systems is the fact that for many naturally occurring complex systems and engineered systems with emergent phenomena, e.g., biological systems, inter-dependent critical infrastructure, social networks, financial systems, it may not always be possible to derive and analyze theoretical mathematical models of the underlying systems \cite{yeung2015global}. In such cases, one has to resort to data-driven techniques for understanding the behavior of such systems.

Motivated by these applications of data-driven modeling, there has been increasing interest in transfer operator theoretic techniques, namely Perron-Frobenius and Koopman operator techniques, for analysis and control of dynamical systems \cite{mezic2005spectral,Dellnitz_Junge,Mezic2000,froyland_extracting,Junge_Osinga,Mezic_comparison,
Dellnitztransport,Vaidya_TAC, mezic_koopmanism, EDMD_williams,
mezic_koopman_stability, yeung2018koopman,yeung2017learning,sinha_sparse_koopman_acc,sinha_equivariant_IFAC}. In the application front, \cite{surana_observer} used Koopman operators for design of observers for general nonlinear systems. Again, in \cite{optimal_placement_ECC,optimal_placement_JMAA,sinha_IT_optimal_placement_ICC, sinha_IT_optimal_placement_arxiv} the Perron-Frobenius operator was used for control of non-equilibrium dynamics. The Koopman operator, which is adjoint of the Perron-Frobeius operator, has also found applications in many different branches like power networks \cite{susuki2013nonlinear, sinha_computationally_efficient,sinha_online_PES}, identification of causal structure using information transfer and its applications in power systems \cite{sinha_IT_CDC_2015,sinha_IT_CDC_2016,sinha_IT_data_acc,sinha_IT_data_journal,sinha_IT_ICC,sinha_IT_power_CDC,sinha_IT_power_journal}, biological systems \cite{sinha_genetic,hasnain2019optimal} etc.

The major advantage of the operator theoretic framework for analysis and control of dynamical systems is that these methods facilitate data-driven learning of dynamical systems. In particular, a finite-dimensional approximation of both Perron-Frobenius and Koopman operators can be constructed from time-series data obtained from experiments and different data-driven methods for constructing finite-dimensional approximations of these operators have been proposed \cite{dellnitz2002set, Mezic2000,DMD_schmitt,rowley2009spectral,EDMD_williams}. Among these algorithms Dynamic Mode Decomposition (DMD) and Extended Dynamic Mode Decomposition (EDMD) are used most extensively for the computation of the finite-dimensional approximation of the Koopman operators. Recent works have also generalized the algorithms for computation of the transfer operators to account for process and observation noise and for Random Dynamical Systems (RDS)  \cite{mezic_stochastic_koopman_spectrum,PhysRevE.96.033310, robust_DMD_ACC,robust_DMD_journal}. In \cite{mezic_stochastic_koopman_spectrum} the authors have provided a characterization of the spectrum and eigenfunctions of the Koopman operator for discrete and continuous time RDS, while in \cite{PhysRevE.96.033310}, the authors have provided an algorithm to compute the Koopman operator for systems with both process and observation noise. In \cite{robust_DMD_ACC,robust_DMD_journal} the authors used robust optimization-based techniques to compute the approximate Koopman operator for data sets of finite length and have shown that normal DMD or EDMD and subspace DMD \cite{PhysRevE.96.033310} lead to an unsatisfactory approximation of Koopman operator for data sets of finite length.

A different and often practical challenge that researchers have to account for is the scenario when the obtained data set has only few time-points (sparse data) which makes the problem of computation of the Koopman operator ill-posed. Sparse data refers to data sets with few data points and can have an immense effect on the ability to train the Koopman operator into producing accurate predictions. In particular, existing DMD and EDMD algorithms may lead to an ill-conditioned least-square problem. In this paper, we address this specific problem of computation of Koopman operator when the data set has few data points. We append artificial data points to the sparse data set to enrich the data and use robust optimization-based techniques to obtain the approximate Koopman operator. The robust optimization problem is a min-max problem which can be approximated as a least squares problem with a regularization term. The regularization parameter imposes sparsity in the Koopman operator. Moreover, it prevents over-fitting of the data and hence can be used to design a data-driven predictor \cite{korda_mezic_predictor}. Furthermore, we discuss the complexity and performance of the proposed Sparse Koopman Algorithm using the concepts of Vapnik-Chervonenkis (VC) dimension \cite{vapnik_book,mostafa_learning_book} and Bias-Variance Trade-off \cite{mostafa_learning_book} and show that the proposed idea of appending artificial data points do improve the Koopman learning problem. We also demonstrate the efficacy of our algorithm on three different dynamical systems, namely a linear network of oscillators, a nonlinear system and a system governed by a Partial Differential Equation (PDE). 

The organization of the paper is as follows. In section \ref{section_transfer_operators} we provide the basics of transfer operators followed by a discussion of DMD and EDMD algorithms in section \ref{section_DMD}. In section \ref{section_sparse_Koopman} we present the main results of the paper and state the algorithm to construct the Koopman operator for sparse data. Analysis of the performance of the proposed algorithm, based on Vapnik-Chervonenkis dimension is discussed in section \ref{section_VC_analysis} and design of the robust predictor is presented in section \ref{section_predictor}, with simulation results in section \ref{section_simulation}. Finally we conclude the paper in section \ref{section_conclusion}.

\section{Transfer Operators for Dynamical Systems}\label{section_transfer_operators}
Consider a discrete-time dynamical system
\begin{eqnarray}\label{system}
z_{t+1} = T(z_t)
\end{eqnarray}
where $T:Z\subset \mathbb{R}^N \to Z$ is assumed to be an invertible smooth diffeomorphism.  Associated with the dynamical system (\ref{system}) is the Borel-$\sigma$ algebra ${\cal B}(Z)$ on $Z$ and the vector space ${\cal M}(Z)$ of bounded complex valued measures on $X$. With this, two linear operators, namely, Perron-Frobenius (P-F) and Koopman operator, can be defined as follows \cite{Lasota} :
\begin{definition}[Perron-Frobenius Operator] 
The Perrorn-Frobenius operator $\mathbb{P}:{\cal M}(Z)\to {\cal M}(Z)$ is given by
\[[\mathbb{P}\mu](A)=\int_{{\cal Z} }\delta_{T(z)}(A)d\mu(z)=\mu(T^{-1}(A))\]
$\delta_{T(z)}(A)$ is stochastic transition function which measure the probability that point $z$ will reach the set $A$ in one time step under the system mapping $T$. 
\end{definition}

\begin{definition}[Invariant measures] Invariant measures are the fixed points of
the P-F operator $\mathbb{P}$ that are also probability measures. Let $\bar \mu$ be the invariant measure then, $\bar \mu$ satisfies
\[\mathbb{P}\bar \mu=\bar \mu.\]
\end{definition}

If the state space $Z$ is compact, it is known that the P-F operator admits at least one invariant measure.

\begin{definition} [Koopman Operator] 
Given any $h\in\cal{F}$, $\mathbb{U}:{\cal F}\to {\cal F}$ is defined by
\[[\mathbb{U} h](z)=h(T(z))\]
where $\cal F$ is the space of function (observables) invariant under the action of the Koopman operator.
\end{definition}

\begin{figure}[htp!]
\centering
\includegraphics[scale=.4]{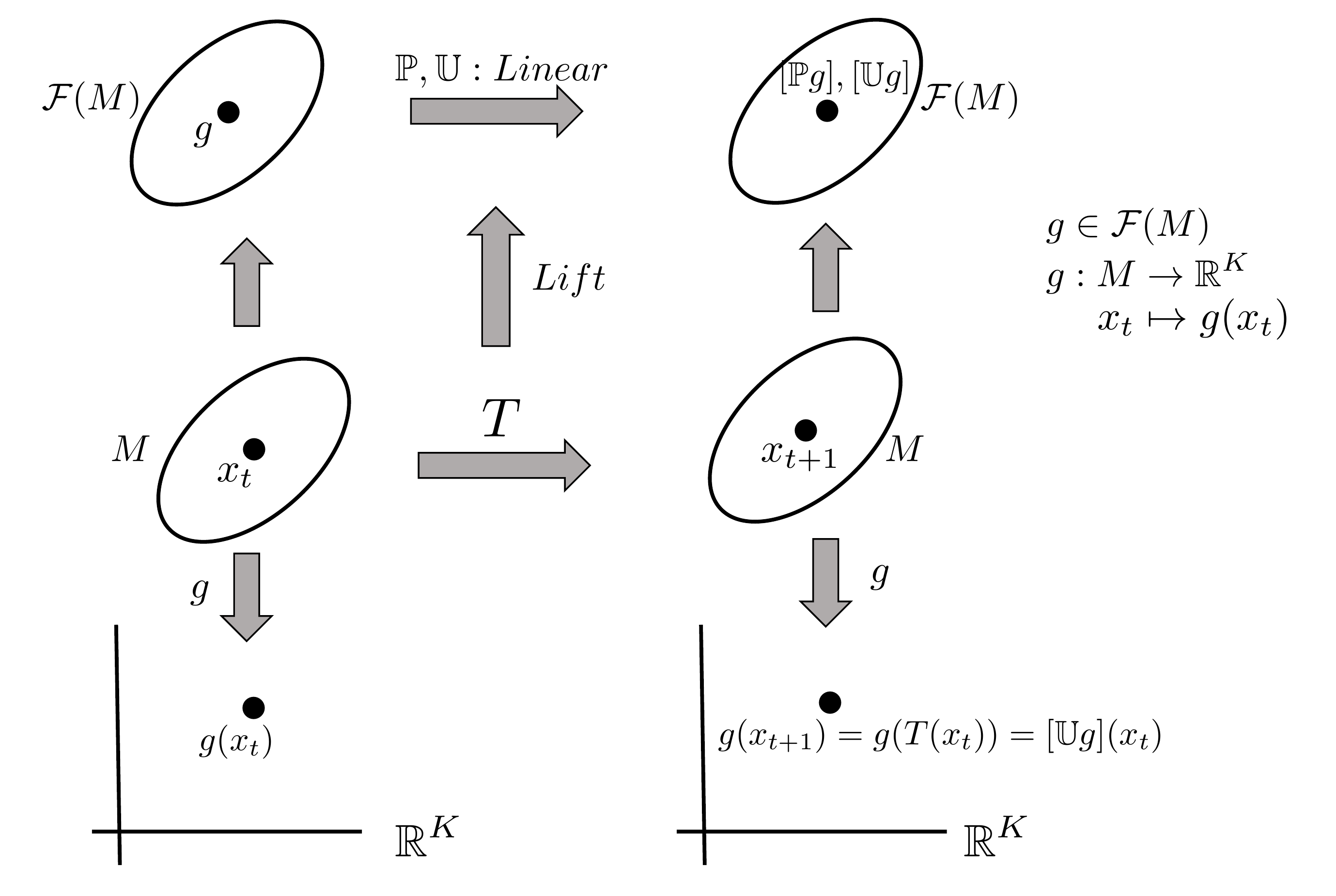}
\caption{Perron-Frobenius and Koopman operators corresponding to a dynamical system.}\label{koopman_diagram}
\end{figure}

Both the P-F operator and the Koopman operator are linear operators, even if the underlying system is non-linear. But while analysis is made tractable by linearity, the trade-off is that these operators are typically infinite dimensional. In particular, the P-F operator and Koopman operator often will lift a dynamical system from a finite-dimensional space to generate an infinite dimensional linear system in infinite dimensions. 
\begin{properties}\label{property}
Following properties for the Koopman and Perron-Frobenius operators can be stated \cite{Lasota}.

\begin{enumerate}
\item [a).] For the Hilbert space ${\cal F}=L_2(Z,{\cal B}, \bar \mu)$ 
\begin{eqnarray*}
&&\parallel \mathbb{U}h\parallel^2=\int_Z |h(T(z))|^2d\bar \mu(z)
\nonumber\\&=&\int_Z | h(z)|^2 d\bar\mu(z)=\parallel h\parallel^2
\end{eqnarray*}
where $\bar \mu$ is an invariant measure. This implies that Koopman operator is unitary.

\item [b).] For any $h\geq 0$, $[\mathbb{U}h](z)\geq 0$ and hence Koopman is a positive operator.

\item [c).]For invertible system $T$, the P-F operator for the inverse system $T^{-1}:Z\to Z$ is given by $\mathbb{P}^*$ and $\mathbb{P}^*\mathbb{P}=\mathbb{P}\mathbb{P}^*=I$. Hence, the P-F operator is unitary.

\item [d).] If the P-F operator is defined to act on the space of densities i.e., $L_1(Z)$ and Koopman operator on space of $L_\infty(Z)$ functions, then it can be shown that the P-F and Koopman operators are dual to each other \footnote{with some abuse of notation we use the same notation for the P-F operator defined on the space of measure and densities.}
\begin{eqnarray*}
&&\left<\mathbb{U} f,g\right>=\int_Z [\mathbb{U} f](z)g(z)dx\nonumber\\&=&\int_Xf(y)g(T^{-1}(y))\left|\frac{dT^{-1}}{dy}\right|dy=\left<f,\mathbb{P} g\right>
\end{eqnarray*}
where $f\in L_{\infty}(Z)$ and $g\in L_1(Z)$ and the P-F operator on the space of densities $L_1(Z)$ is defined as follows
\[[\mathbb{P}g](z)=g(T^{-1}(z))|\frac{dT^{-1}(z)}{dz}|.\]

\item [e).] For $g(z)\geq 0$, $[\mathbb{P}g](z)\geq 0$.

\item [f).] Let $(Z,{\cal B},\mu)$ be the measure space where $\mu$ is a positive but not necessarily the invariant measure of $T:Z\to Z$, then the P-F operator $\mathbb{P}:L_1(Z,{\cal B},\mu)\to L_1(Z,{\cal B},\mu)$  satisfies  following property:

 \[\int_Z [\mathbb{P}g](z)d\mu(z)=\int_Z g(z)d\mu(x).\]\label{Markov_property}
\end{enumerate}
\end{properties}

\section{Finite-dimensional Approximation of the Koopman Operator}\label{section_DMD}

The Koopman operator is an infinite-dimensional operator which governs the evolution of functions on the state-space. Hence, for computation purpose, it is necessary to compute the finite-dimensional approximations of the Koopman operator. To this end, Dynamic Mode Decomposition (DMD) \cite{DMD_schmitt} and Extended Dynamic Mode Decomposition (EDMD) \cite{EDMD_williams} are the most commonly used techniques. In this section, we briefly describe the EDMD algorithm for approximating the Koopman operator. 

Consider 
\begin{eqnarray}
X_p = [x_1,x_2,\ldots,x_M],& X_f = [y_1,y_2,\ldots,y_M] \label{data}
\end{eqnarray}
as snapshots of data set obtained from simulating a discrete time dynamical system $z\mapsto T(z)$ or from an experiment, 
where $x_i\in X$ and $y_i\in X$. We assume $y_i=T(x_i)$. Let $\mathcal{D}=
\{\psi_1,\psi_2,\ldots,\psi_K\}$ be the set of dictionary functions or observables, where $\psi : X \to \mathbb{C}$. Let ${\cal G}_{\cal D}$ denote the span of ${\cal D}$ such that ${\cal G}_{\cal D}\subset {\cal G}$, where ${\cal G} = L_2(X,{\cal B},\mu)$. The choice of dictionary functions are very crucial and it should be rich enough to approximate the leading eigenfunctions of Koopman operator. Define vector valued function $\mathbf{\Psi}:X\to \mathbb{C}^{K}$
\begin{equation}
\mathbf{\Psi}(\boldsymbol{x}):=\begin{bmatrix}\psi_1(x) & \psi_2(x) & \cdots & \psi_K(x)\end{bmatrix}
\end{equation}
In this application, $\mathbf{\Psi}$ is the mapping from physical space to feature space. Any function $\phi,\hat{\phi}\in \mathcal{G}_{\cal D}$ can be written as
\begin{eqnarray}
\phi = \sum_{k=1}^K a_k\psi_k=\boldsymbol{\Psi^T a},\quad \hat{\phi} = \sum_{k=1}^K \hat{a}_k\psi_k=\boldsymbol{\Psi^T \hat{a}}
\end{eqnarray}
for some set of coefficients $\boldsymbol{a},\boldsymbol{\hat{a}}\in \mathbb{C}^K$. Let \[ \hat{\phi}(x)=[\mathbb{U}\phi](x)+r,\]
where $r$ is a residual function that appears because $\mathcal{G}_{\cal D}$ is not necessarily invariant to the action of the Koopman operator. To find the optimal mapping which can minimize this residual, let $\bf K$ be the finite dimensional approximation of the Koopman operator. Then the  matrix $\bf K$ is obtained as a solution of least square problem as follows 
\begin{equation}\label{edmd_op}
\min\limits_{\bf K}\parallel {\bf G}{\bf K}-{\bf A}\parallel_F
\end{equation}
\begin{eqnarray}\label{edmd1}
\begin{aligned}
& {\bf G}=\frac{1}{M}\sum_{m=1}^{M} \boldsymbol{\Psi}({x}_m)^\top \boldsymbol{\Psi}({x}_m)\\
& {\bf A}=\frac{1}{M}\sum_{m=1}^M \boldsymbol{\Psi}({x}_m)^\top \boldsymbol{\Psi}({y}_m),
\end{aligned}
\end{eqnarray}
with ${\bf K},{\bf G},{\bf A}\in\mathbb{C}^{K\times K}$. The optimization problem (\ref{edmd_op}) can be solved explicitly to obtain following solution for the matrix $\bf K$
\begin{eqnarray}
{\bf K}_{EDMD}={\bf G}^\dagger {\bf A}\label{EDMD_formula}
\end{eqnarray}
where ${\bf G}^{\dagger}$ is the psedoinverse of matrix $\bf G$.
DMD is a special case of EDMD algorithm with ${\bf \Psi}(x) = x$.

\section{Koopman Operator Construction for Sparse Data}\label{section_sparse_Koopman}
The finite-dimensional Koopman operator is obtained as a solution to a least-squares problem (\ref{edmd_op}). However, in many experiments, it often is the case that the obtained data-set does not contain enough training examples and thus making the least squares problem ill-posed. In this case, existing algorithms like DMD or EDMD fail to generate acceptable Koopman operators. In fact, in many instances, these algorithms lead to unstable eigenvalues, even though the underlying system is stable \cite{robust_DMD_ACC,robust_DMD_journal}. In this section, we present the main result of this paper, where we propose an algorithm to compute the approximate Koopman operator from a sparse training data-set. In particular, this is achieved in two steps. In the first step we append extra artificial data-points to the training data to make the least-squares problem well-posed and in the second step we account for the \emph{artificiality} of the added data points. 

\subsection{Enrichment of the existing dataset}
The intuition behind addition of extra data-points is the assumption that the underlying dynamical system is at least ${\cal C}^1$ and thus one can argue that \emph{nearby} points are mapped to \emph{nearby} points. 

Let $\bar{X}_p = [{x}_1,{x}_2,\cdots , {x}_M]$ and $\bar{X}_f = [{y}_1,{y}_2,\cdots , {y}_M]$ be the training data from an experiment or a simulation, such that $y_i=T(x_i)$. Corresponding to a training example $x_i$, consider the point ${x}_i+\delta x_i$, where $\parallel \delta x_i\parallel \leq \lambda_X$. Since, $T$ is at least ${\cal C}^1$, 
\begin{eqnarray}
T(x_i+\delta x_i)\approx T(x_i)+\frac{\partial T}{\partial x}\delta x_i = y_i + \delta y_i.
\end{eqnarray}
Since $\parallel \delta x_i\parallel \leq \lambda_X$, $x_i+\delta x_i\in B(x_i,\lambda_X)$, where 
\begin{eqnarray*}
B(x_0,r) = \{x\in\mathbb{R}^N| \parallel x - x_0 \parallel \leq r\}.
\end{eqnarray*}
Again $T$ being $T$ is at least ${\cal C}^1$ implies that $\parallel\frac{\partial T}{\partial x}\parallel \leq \lambda_T$ and hence $T(x_i+\delta x_i)\in B(y_i,\lambda_Y)$, where $\lambda_Y\leq \lambda_X\lambda_T$.

\begin{figure}[htp!]
\centering
\includegraphics[scale=.5]{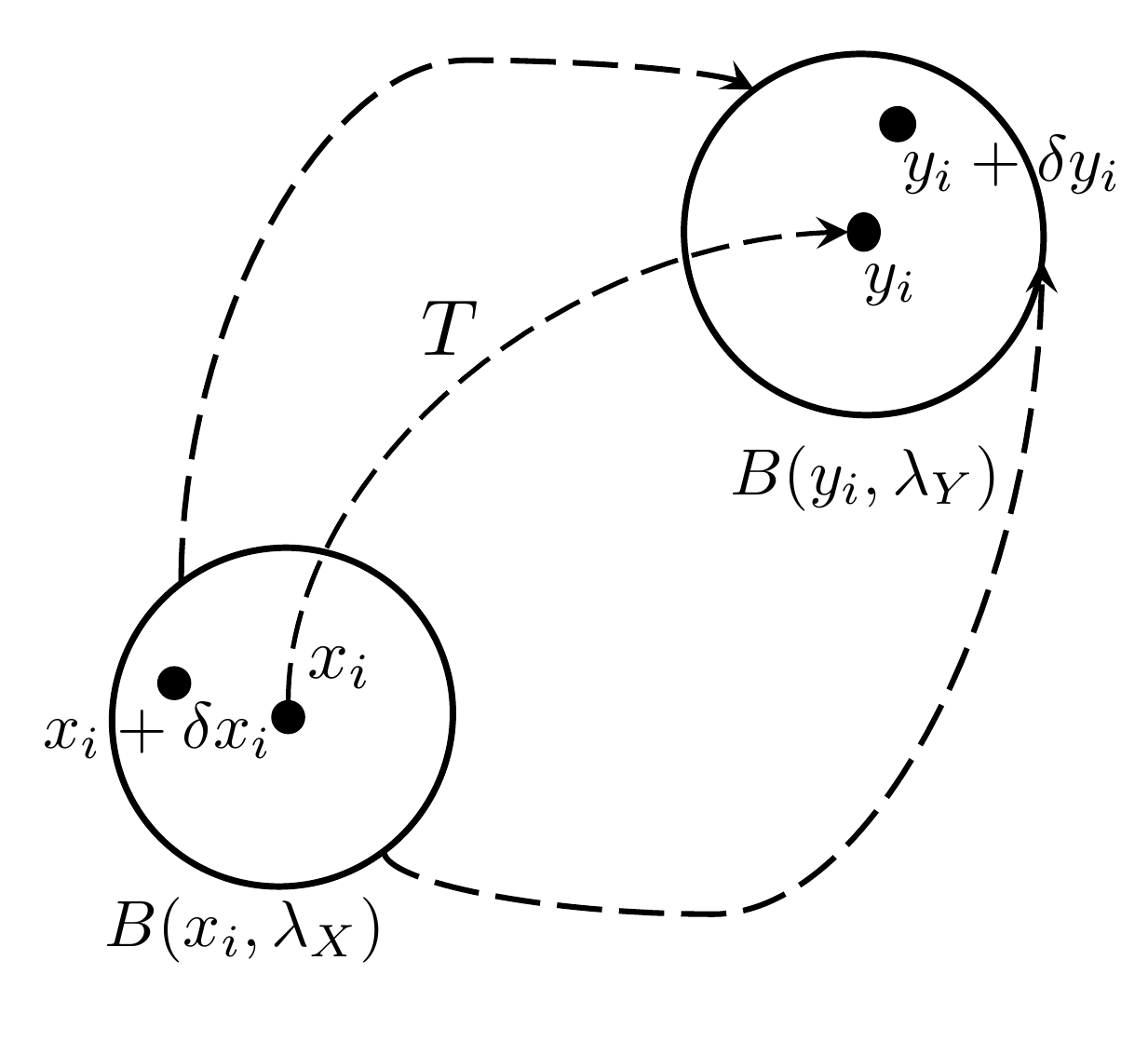}
\caption{Since the map $T$ of the dynamical system is at least ${\cal C}^1$, \emph{nearby} points are mapped to \emph{nearby} points.}\label{continuity}
\end{figure}

Hence, $T$ maps $x\in B(x_i,\lambda_X)$ to $y\in B(y_i,\lambda_Y)$ (Fig. \ref{continuity}) and it is the compactness of the sets $B(x_i,\lambda_X)$ and $B(y_i,\lambda_Y)$ that is used to enrich the existing data set. In particular, to each observed data tuple $(x_i,y_i)$, we augment an extra data point $(x_i+\delta x_i, y_i+\delta y_i)$, where $\delta x_i$ and $\delta y_i$ are random vectors at $x_i$ and $y_i$ respectively, such that $\parallel\delta x_i\parallel\leq\lambda_X$ and $\parallel\delta y_i\parallel\leq \lambda_Y$. Note that, for each data point more than one data point can be augmented, but for clarity, we will discuss the situation where only one extra artificial data point is augmented to each observed data point. Hence, an artificial data set 
\begin{eqnarray}\label{enriched_dataset}
\begin{aligned}
{X}_p &= [{x}_1,\cdots , {x}_M,x_1+\delta x_1, \cdots , x_M + \delta x_M]\\
& = [{x}_1,\cdots ,{x}_{2M}]\\
{X}_f &= [{y}_1,\cdots , {y}_M,y_1+\delta y_1, \cdots , y_M + \delta y_M]\\
& = [{y}_1,\cdots ,{y}_{2M}]
\end{aligned}
\end{eqnarray}
is created with $2(M+1)$ data points. Here $x_{M+i} = x_i + \delta x_i$ and $y_{M+i} = y_i + \delta y_i$.

\subsection{Robust Optimization Formulation}

Of the $2(M+1)$ data points in the enriched data-set, $M+1$ are obtained from an experiment or a simulation, while the other $M+1$ are artificial data points and these points are chosen at random from $B(x_i,\lambda_X)$ and $B(y_i\lambda_Y)$. This implies that, in general, one cannot guarantee that $y=T(x)$, for any particular chosen pair $(x,y)$ where $x\in B(x_i,\lambda_X)$ and $y\in B(y_i,\lambda_Y)$. Hence instead of treating the artificial data-points as exact representation of the underlying system, we view them as noisy observations and the uncertainty acts as an adversary which tries to maximize the residual. Hence, we use robust optimization techniques to compute a Robust Koopman operator from this enriched data-set and the robust optimization problem can be formulated as the following $\min-\max$ optimization problem:

\begin{equation}\label{edmd_robust}
\min\limits_{\bf K}\max_{\delta\in \Delta}\parallel {\bf G}_\delta{\bf K}-{\bf A}_\delta\parallel_F=:\min\limits_{\bf K}\max_{\delta\in \Delta} {\cal J}({\bf K}, {\bf G}_\delta,{\bf A}_\delta)
\end{equation}
where
\begin{eqnarray}
&&{\bf G}_\delta=\frac{1}{2M}\sum_{i=1}^{2M} \boldsymbol{\Psi}({ x}_i)^\top \boldsymbol{\Psi}({x}_i)\nonumber\\
&&{\bf A}_\delta=\frac{1}{2M}\sum_{i=1}^{2M} \boldsymbol{\Psi}({ x}_i)^\top \boldsymbol{\Psi}({ y}_{i}),
\end{eqnarray}
with ${\bf K},{\bf G}_\delta,{\bf A}_\delta\in\mathbb{C}^{K\times K}$.

The robust optimization problem (\ref{edmd_robust}), is in general non-convex because the cost $\cal J$ may not be a convex function of $\delta$. 

\begin{proposition}
The optimization problem (\ref{edmd_robust}) can be approximated as
\begin{equation}\label{edmd_robust_convex}
\min\limits_{\bf K}\max_{\delta{\bf G},\delta{\bf A}\in  {\cal U}}\parallel ({\bf G}+\delta {\bf G}){\bf K}-({\bf A}+\delta {\bf A})\parallel_F
\end{equation}
where $\cal U$ is a compact set in $\mathbb{R}^{K\times K}$.
\end{proposition}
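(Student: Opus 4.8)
The plan is to exploit the fact that the artificial samples are obtained by \emph{small} perturbations of the experimental samples, so that the uncertain data matrices ${\bf G}_\delta$ and ${\bf A}_\delta$ can be written as a fixed nominal part plus a bounded perturbation that depends, to leading order, \emph{linearly} on the perturbation vectors $\delta x_i,\delta y_i$. First I would separate the sum defining ${\bf G}_\delta$ into the contribution of the $M$ experimental points ($i\le M$) and that of the $M$ artificial points ($x_{M+i}=x_i+\delta x_i$):
\begin{equation*}
{\bf G}_\delta=\frac{1}{2M}\sum_{i=1}^{M}\boldsymbol{\Psi}(x_i)^\top\boldsymbol{\Psi}(x_i)+\frac{1}{2M}\sum_{i=1}^{M}\boldsymbol{\Psi}(x_i+\delta x_i)^\top\boldsymbol{\Psi}(x_i+\delta x_i).
\end{equation*}
When $\delta x_i=0$ for all $i$ the two sums coincide and ${\bf G}_\delta$ collapses to the nominal matrix ${\bf G}$ of (\ref{edmd1}); the same holds for ${\bf A}_\delta$ and ${\bf A}$. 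This identifies the nominal operator with the $\delta=0$ configuration and suggests the decomposition ${\bf G}_\delta={\bf G}+\delta{\bf G}$, ${\bf A}_\delta={\bf A}+\delta{\bf A}$.

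Next, since $T$ and the dictionary $\mathcal{D}$ are at least ${\cal C}^1$, I would Taylor expand each perturbed observable, $\boldsymbol{\Psi}(x_i+\delta x_i)=\boldsymbol{\Psi}(x_i)+\big(D\boldsymbol{\Psi}(x_i)\,\delta x_i\big)^\top+O(\|\delta x_i\|^2)$, and likewise for $\boldsymbol{\Psi}(y_i+\delta y_i)$. Substituting into the artificial-point sums and discarding the second-order remainders yields explicit first-order expressions for $\delta{\bf G}$ and $\delta{\bf A}$ that are linear in $\{\delta x_i\}$ and $\{\delta y_i\}$. Because $\|\delta x_i\|\le\lambda_X$ and $\|\delta y_i\|\le\lambda_Y\le\lambda_X\lambda_T$, each $\delta x_i$ and $\delta y_i$ ranges over a compact Euclidean ball, so the continuous (indeed linear, to leading order) map $\delta\mapsto(\delta{\bf G},\delta{\bf A})$ sends the admissible perturbation set $\Delta$ to a \emph{bounded} family of matrix pairs. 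Taking closures and, for real-valued dictionaries, identifying the range with a subset of $\mathbb{R}^{K\times K}$, I would then \emph{define} (or outer-approximate) ${\cal U}\subset\mathbb{R}^{K\times K}$ to be a compact convex set containing every admissible $\delta{\bf G}$ and every admissible $\delta{\bf A}$, for instance a Frobenius-norm ball of radius scaling with $\lambda_X$ and $\lambda_Y$. With the uncertainty described by $\delta{\bf G},\delta{\bf A}\in{\cal U}$, the inner maximization in (\ref{edmd_robust}) is replaced by a maximization over the compact set ${\cal U}$, which is exactly (\ref{edmd_robust_convex}).

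The approximation is thus twofold, and these two points are where I expect the care to be needed. The first source of error is the neglect of the $O(\|\delta x_i\|^2)$ Taylor remainders; this is controlled by the smallness of $\lambda_X$ together with a uniform bound on the Hessians of the $\psi_k$ over the compact balls $B(x_i,\lambda_X)$, so the induced perturbation of the cost ${\cal J}$ is $O(\lambda_X^2)$ and vanishes as $\lambda_X\to 0$. The second, and more delicate, source is that in the exact problem $\delta{\bf G}$ and $\delta{\bf A}$ are \emph{not} free parameters: they are coupled through the shared vectors $\delta x_i$ and through the constraint $\delta y_i\approx\frac{\partial T}{\partial x}\delta x_i$ imposed by the dynamics, so the true image of $\Delta$ is a coupled, generally nonconvex set.

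Replacing this coupled image by the decoupled compact convex set ${\cal U}$ is an \emph{outer} relaxation of the uncertainty: it enlarges the adversary's feasible set and hence upper-bounds the worst-case residual, which is precisely what renders the resulting robust least-squares problem convex and tractable. The main obstacle is therefore not the existence of the reformulation but its \emph{tightness} — arguing that ${\cal U}$ can be chosen small enough that the outer relaxation does not grossly inflate the worst case. It is here that the compactness of the balls $B(x_i,\lambda_X)$, $B(y_i,\lambda_Y)$ and the bound $\lambda_Y\le\lambda_X\lambda_T$ do the essential work, by providing explicit radii for ${\cal U}$ in terms of the sampling parameters and the Lipschitz constant $\lambda_T$ of $T$.
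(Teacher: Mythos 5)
Your proposal is correct and follows essentially the same route as the paper's own proof: a first-order Taylor expansion of $\boldsymbol{\Psi}$ about the sampled points, the decomposition ${\bf G}_\delta \approx {\bf G}+\delta{\bf G}$ and ${\bf A}_\delta \approx {\bf A}+\delta{\bf A}$, and compactness of the admissible perturbation set deduced from the bounds $\parallel\delta x_i\parallel\le\lambda_X$ and $\parallel\delta y_i\parallel\le\lambda_Y$. Your extra observations --- that the second-order Taylor remainder is the neglected error, and that treating $\delta{\bf G},\delta{\bf A}$ as independent elements of a common compact set ${\cal U}$ is an outer relaxation of the true coupled uncertainty --- go beyond what the paper states, but they refine rather than replace its argument.
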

\begin{proof}
From Taylor series expansion we have, ${\bf \Psi}(x_i+\delta x_i) = {\bf \Psi}(x_i) +  {\bf \Psi}'(x_i) \delta x_i+h.o.t.$, where ${\bf \Psi}'(x_i)$ is the first derivative of ${\bf \Psi}(x)$ at $x_i$. Hence,
\begin{eqnarray*}
{\bf G}_{\delta} &\approx& {\bf G} + \frac{1}{2M}\sum_{i=1}^{2M}{\bf \Psi}^\top (x_i)\delta x_i {\bf \Psi}'(x_i)\\
&=& {\bf G} +\delta {\bf G}
\end{eqnarray*}
where $\delta {\bf G} = \frac{1}{2M}\sum_{i=1}^{2M}{\bf \Psi}^\top (x_i)\delta x_i {\bf \Psi}'(x_i)$.

Moreover,
\begin{eqnarray*}
&&\parallel \delta {\bf G} \parallel_F = \parallel \frac{1}{2M}\sum_{i=1}^{2M}{\bf \Psi}^\top (x_i)\delta x_i {\bf \Psi}'(x_i) \parallel_F\\
&\leq& \frac{1}{2M}\sum_{i = 1}^{2M}\parallel {\bf \Psi}^\top (x_i)\delta x_i {\bf \Psi}'(x_i)\parallel_F \\
&\leq&\frac{1}{2M}\sum_{i=1}^{2M}\parallel {\bf \Psi}^\top (x_i)\parallel_F \cdot \parallel\delta x_i \parallel_F \cdot \parallel {\bf \Psi}'(x_i)\parallel_F
\end{eqnarray*}
Hence, $\delta {\bf G}$ belongs to a compact set ${\cal U}_1$. Similarly, one can show ${\bf A}_\delta \approx {\bf A} + \delta {\bf A}$ and $\delta {\bf A}$ belongs to a compact set ${\cal U}_2$. Letting ${\cal U}={\cal U}_1\cup{\cal U}_2$, proves the proposition.
\end{proof}

The above proposition allows us to compute the Koopman operator $\bf K$ as a solution of a robust optimization problem (\ref{edmd_robust_convex}). The optimization problem (\ref{edmd_robust_convex}) has interesting connections with optimization problem involving regularization. In particular, one has the following theorem.

\begin{theorem}
The optimization problem 
\begin{equation}
\min\limits_{\bf K}\max_{\delta{\bf G},\delta{\bf A}\in  {\cal U}}\parallel ({\bf G}+\delta {\bf G}){\bf K}-({\bf A}+\delta {\bf A})\parallel_F
\end{equation}
is equivalent to the following optimization problem
\begin{eqnarray}\label{rob_eqv}
\min\limits_{\bf K}\parallel {\bf G}{\bf K}-{\bf A}\parallel_F+\lambda \parallel {\bf K}\parallel_F\label{regular}
\end{eqnarray}
\end{theorem}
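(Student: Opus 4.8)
The plan is to treat (\ref{regular}) as a matrix robust least-squares identity, in the spirit of the El~Ghaoui--Lebret worst-case analysis for uncertain least squares, where a norm-bounded data perturbation collapses into a Tikhonov regularizer. The strategy is to fix $\bf K$, evaluate the inner maximization over $(\delta{\bf G},\delta{\bf A})\in\mathcal U$ in closed form, show its value equals $\|{\bf G}{\bf K}-{\bf A}\|_F+\lambda\|{\bf K}\|_F$, and then apply $\min_{\bf K}$ to both sides so the two problems match term by term.

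First I would write the perturbed residual as
\begin{equation*}
({\bf G}+\delta{\bf G}){\bf K}-({\bf A}+\delta{\bf A}) = {\bf R} + \delta{\bf G}\,{\bf K}-\delta{\bf A},\qquad {\bf R}:={\bf G}{\bf K}-{\bf A},
\end{equation*}
and obtain the upper bound from the triangle inequality together with submultiplicativity,
\begin{equation*}
\|{\bf R}+\delta{\bf G}\,{\bf K}-\delta{\bf A}\|_F \le \|{\bf R}\|_F + \|\delta{\bf G}\|\,\|{\bf K}\|_F + \|\delta{\bf A}\|_F,
\end{equation*}
so that, with $\mathcal U$ the ball of radius $\lambda$, the inner maximum is at most $\|{\bf R}\|_F+\lambda\|{\bf K}\|_F$ (absorbing the $\delta{\bf A}$ contribution into $\lambda$). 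The substantive content is the matching lower bound: I would exhibit an adversarial perturbation that simultaneously saturates the norm constraint and aligns $\delta{\bf G}\,{\bf K}-\delta{\bf A}$ with $\bf R$, turning the triangle inequality into an equality. For $\delta{\bf A}$ this is immediate, via the parallel choice $\delta{\bf A}=-\lambda\,{\bf R}/\|{\bf R}\|_F$; for the multiplicative term one aligns $\delta{\bf G}$ with the leading singular directions so that $\delta{\bf G}\,{\bf K}$ is a nonnegative multiple of $\bf R$ of maximal admissible norm.

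I expect the alignment step for $\delta{\bf G}$ to be the main obstacle, and it is precisely where the exact Frobenius form of the regularizer becomes delicate. The difficulty is that the perturbation enters multiplicatively through $\delta{\bf G}\,{\bf K}$: over a Frobenius ball $\|\delta{\bf G}\|_F\le\lambda$ the tight worst case of $\|\delta{\bf G}\,{\bf K}\|_F$ is $\lambda\|{\bf K}\|_2$ (the spectral norm), \emph{not} $\lambda\|{\bf K}\|_F$, so to recover $\lambda\|{\bf K}\|_F$ one must either measure $\delta{\bf G}$ in the spectral norm or let the adversary act column-by-column, treating each column ${\bf k}_j$ of $\bf K$ as an independent vector robust least-squares problem and reassembling. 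Moreover, saturating $\|\delta{\bf G}\,{\bf K}\|_F$ and achieving exact parallelism with $\bf R$ need not be simultaneously feasible for a single matrix $\delta{\bf G}$ once $\bf K$ has more than one nontrivial singular value, so the clean rank-one argument of the vector case must be replaced by a column-wise construction, or by an inequality that is then shown to be tight over the particular $\mathcal U$ inherited from the preceding proposition. Finally, the degenerate cases ${\bf K}=0$ and ${\bf R}=0$ should be dispatched separately, since the aligning directions are undefined there; in both the claimed identity is immediate.
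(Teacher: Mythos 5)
Your overall strategy --- fix $\bf K$, compute the inner maximum in closed form in the El~Ghaoui--Lebret style, and match it to the regularized objective --- is the same strategy as the paper's proof, and your triangle-inequality upper bound is its first half. But your proposal stops exactly at the step that carries all the content. You yourself observe that no single matrix $\delta{\bf G}$ can in general simultaneously saturate the norm constraint and make $\delta{\bf G}\,{\bf K}-\delta{\bf A}$ parallel to ${\bf R}={\bf G}{\bf K}-{\bf A}$, and the remedies you sketch do not rescue the claimed identity: over a Frobenius ball the tight worst case of $\|\delta{\bf G}\,{\bf K}\|_F$ is $\lambda\|{\bf K}\|_2$, not $\lambda\|{\bf K}\|_F$; over a spectral ball saturation forces $\delta{\bf G}\,{\bf K}$ to be an orthogonal image of $\lambda{\bf K}$, so parallelism with ${\bf R}$ requires the singular values of ${\bf R}$ to be proportional to those of ${\bf K}$, which fails generically (your own observation); and a column-by-column adversary solves a different problem, since the per-column worst cases $\|r_j\|_2+\lambda\sqrt{\|k_j\|_2^2+1}$ combine in sum of squares, giving a value strictly below $\|{\bf R}\|_F+\lambda\|{\bf K}\|_F$ in general. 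So as written this is a plan with an unresolved core, not a proof.

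The device the paper uses to get past this point, and which your proposal is missing, is vectorization: identify $\mathbb{R}^{K\times K}\cong\mathbb{R}^{K^2}$, write $\|{\bf G}{\bf K}-{\bf A}\|_F=\|({\bf G}\otimes I_K){\cal K}-{\cal A}\|_2$ with ${\cal K}=\mathrm{vec}({\bf K})$, and run the rank-one alignment argument of the vector case in the lifted space, taking $[\delta{\cal G}\ \ \delta{\cal A}]$ proportional to $u\,[{\cal K}^\top\ \ K]$ with $u$ the unit residual direction. There saturation and alignment are simultaneously achievable, the worst case comes out as $\|{\bf G}{\bf K}-{\bf A}\|_F+\lambda\sqrt{\|{\bf K}\|_F^2+K}$ (the scalar $K$ playing the role of the $+1$ in the vector formula), and the paper then argues that the constant $K$ under the root does not affect the minimizer. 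To your credit, the obstruction you identified is genuine and the paper does not truly defeat it either: the rank-one perturbation $u{\cal K}^\top$ used in the lifted space is not of the Kronecker form $\delta{\bf G}\otimes I_K$, hence is not realizable by any actual pair $(\delta{\bf G},\delta{\bf A})$ acting on the original matrix problem, so the ``equivalence'' holds only after the uncertainty set is tacitly enlarged to unstructured perturbations of ${\bf G}\otimes I_K$; moreover the exact regularizer is $\sqrt{\|{\bf K}\|_F^2+K}$ rather than $\|{\bf K}\|_F$, with the final identification of minimizers left informal. Had you vectorized first, your alignment construction would have gone through verbatim; without that step, the matrix-level adversary you need does not exist.
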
 
\begin{proof}
For a $K\times K$ matrix $M=[m_{i,j}]\in\mathbb{R}^{K\times K}$, let ${\cal M}$ denote the vector 
\begin{eqnarray*}
{\cal M}=[m_{1,1},\cdots ,m_{K,1},m_{1,2},\cdots ,m_{K,2},\cdots , m_{K,K}]^\top.
\end{eqnarray*}
This follows from the fact that $\mathbb{R}^{K\times K} \cong \mathbb{R}^{K^2}$. Hence, 
\[\parallel M\parallel_F = \parallel {\cal M}\parallel_2.\]
Again, for two matrices $A$ and $B$, let $A\otimes B$ denote the Kronecker product of $A$ and $B$. Let ${\cal K}$ be the vector form of $\bf K$ and let $\cal A$ and $\delta {\cal A}$ be defined similarly.

Then the min-max optimization problem can be written as
{\small
\begin{eqnarray}\label{opt_2_norm}\nonumber
&&{\cal J} = \min_{\bf K}\max_{\delta{\bf G},\delta{\bf A}\in  \bar \Delta}\parallel ({\bf G}+\delta {\bf G}){\bf K}-({\bf A}+\delta {\bf A})\parallel_F\\ \nonumber
&=& \min_{\cal K}\max_{\delta{\bf G},\delta{\cal A}\in  \bar \Delta}\parallel [({\bf G}+\delta {\bf G})\otimes I_K]{\cal K} - ({\cal A}+\delta {\cal A})\parallel_F\\
&=& \min_{\cal K}\max_{\delta{\bf G},\delta{\cal A}\in  \bar \Delta}\parallel [({\bf G}+\delta {\bf G})\otimes I_K]{\cal K} - ({\cal A}+\delta {\cal A})\parallel_2
\end{eqnarray}
}
where $I_K$ is the $K\times K$ identity matrix.
Writing ${\bf G}\otimes I_K$ as $\hat{G}$ and $\delta {\bf G}\otimes I_K$ as $\delta\hat{G}$, the optimization problem (\ref{opt_2_norm}) can be written as
\begin{eqnarray}\label{opt_2_norm_modified}
{\cal J} = \min_{\cal K}\max_{\substack{\delta{\hat{G}}\in \Pi_K\bar{\Delta}\\ \delta{\cal A}\in  \bar \Delta}}\parallel ({\hat{G}}+\delta {\hat{G}}){\cal K} - ({\cal A}+\delta {\cal A})\parallel_2
\end{eqnarray}

Fix ${\bf K}\in\mathbb{R}^{K\times K}$ and let
\begin{eqnarray}\label{worst_residual_geq}
r = \max_{\substack{\delta{\hat{G}}\in \Pi_K\bar{\Delta}\\ \delta{\cal A}\in  \bar \Delta}}\parallel ({\hat{G}}+\delta {\hat{G}}){\cal K} - ({\cal A}+\delta {\cal A})\parallel_2
\end{eqnarray}
be the worst-case residual. Then,
\begin{eqnarray}\nonumber\label{worst_residual_leq}
r &&\leq \max_{\substack{\delta{\hat{G}}\in \Pi_K\bar{\Delta}\\ \delta{\cal A}\in  \bar \Delta}}\parallel{\hat G}{\cal K} - {\cal A} \parallel_2 + \parallel\delta {\hat G}{\cal K} - \delta {\cal A} \parallel_2 \leq \parallel{\hat G}{\cal K} - {\cal A} \parallel_2 + \lambda \parallel{\cal K} - \mathds{1} \parallel_2\\
&& \leq \parallel{\hat G}{\cal K} - {\cal A} \parallel_2 + \lambda \sqrt{\parallel {\cal K}\parallel_2^2 + K} = \parallel {\bf G} {\bf K} - {\bf A} \parallel_F + \lambda \sqrt{\parallel {\bf K}\parallel_F^2 + K}
\end{eqnarray}

Again, choose $[\delta {\cal G} \quad \delta {\cal A}]$ as 
\[[\delta {\cal G} \quad \delta {\cal A}]= \frac{\lambda u}{\sqrt{\parallel {\cal K}\parallel_2^2 + K}}[{\cal K}^\top\quad K],\]
where 
\begin{equation}
  u =
    \begin{cases}
      \frac{{\cal G}{\cal K}-{\cal A}}{\parallel {\cal G}{\cal K}-{\cal A} \parallel}, \textnormal{ if } {\cal G}{\cal K}\neq{\cal A}\\
      \textnormal{any unit norm vector otherwise.}
    \end{cases}       
\end{equation}
Then,
\begin{eqnarray}\nonumber\label{worst_residual_geq1}
r &=& \max_{\substack{\delta{\hat{G}}\in \Pi_K\bar{\Delta}\\ \delta{\cal A}\in  \bar \Delta}}\parallel ({\hat{G}}{\cal K}-{\cal A}) + (\delta {\hat{G}}{\cal K}-\delta {\cal A})\parallel_2\\ \nonumber
&=& \max_{\substack{\delta{\hat{G}}\in \Pi_K\bar{\Delta}\\ \delta{\cal A}\in  \bar \Delta}}\parallel ({\hat{G}}{\cal K}-{\cal A}) + \lambda (\frac{{\cal G}{\cal K}-{\cal A}}{\parallel {\cal G}{\cal K}-{\cal A} \parallel}{\cal K}^\top {\cal K} +K\frac{{\cal G}{\cal K}-{\cal A}}{\parallel {\cal G}{\cal K}-{\cal A} \parallel})\parallel_2\\ \nonumber 
&\geq& \parallel ({\hat{G}}{\cal K}-{\cal A})\parallel_2 +\lambda \parallel (\frac{{\cal G}{\cal K}-{\cal A}}{\parallel {\cal G}{\cal K}-{\cal A} \parallel}{\cal K}^\top {\cal K} +K\frac{{\cal G}{\cal K}-{\cal A}}{\parallel {\cal G}{\cal K}-{\cal A} \parallel})\parallel_2\\ 
&\geq& \parallel ({\hat{G}}{\cal K}-{\cal A})\parallel_2 + \lambda \sqrt{{\cal K}^\top {\cal K}+K} = \parallel {\bf G} {\bf K} - {\bf A} \parallel_F + \lambda \sqrt{\parallel {\bf K}\parallel_F^2 + K}
\end{eqnarray}

Hence, from (\ref{worst_residual_leq}) and (\ref{worst_residual_geq1}), the worst case residual is 
\begin{eqnarray}\label{worst_res_eqv}
r = \min_{{\bf K}}\parallel {\bf G}{\bf K} - {\bf A} \parallel_F + \lambda \sqrt{\parallel {\bf K}\parallel_F^2 +K}.
\end{eqnarray}
Since, K is a constant, the $\bf K$ that minimizes $r$ in (\ref{worst_res_eqv}) is the same $\bf K$ that minimizes
\[\parallel {\bf G}{\bf K} - {\bf A} \parallel_F + \lambda \parallel {\bf K}\parallel_F .\]
\end{proof}

The above theorem allows computation of the approximate Koopman operator as a solution of an optimization problem with a regularization term. In particular, the approximate Koopman operator can be obtained as a solution of the following optimization problem
\begin{eqnarray}\label{robust_opt}
\parallel {\bf G} {\bf K} - {\bf A} \parallel_F + \lambda \parallel {\bf K}\parallel_F.
\end{eqnarray}
where
\begin{eqnarray}
\begin{aligned}
&{\bf G}=\frac{1}{2M}\sum_{i=1}^{2M} \boldsymbol{\Psi}({ x}_i)^\top \boldsymbol{\Psi}({x}_i)\\
&{\bf A}=\frac{1}{2M}\sum_{i=1}^{2M} \boldsymbol{\Psi}({ x}_i)^\top \boldsymbol{\Psi}({ y}_{i}).
\end{aligned}
\end{eqnarray}
\begin{algorithm}[htp!]
\caption{Sparse Koopman Algorithm}
\begin{enumerate}
\item{To the existing data set $D = [x_1,\cdots, x_M]$ add new data points $\tilde{x}_i = x_i+\delta x_i$, where $\parallel \delta x_i\parallel \leq c$.}
\item{Form the enriched data set $\bar{D} = [x_1,\cdots, x_M, \tilde{x}_1,\cdots, \tilde{x}_M]$.}
\item{Form the sets $X_p = [x_1,\cdots, x_{M-1}, \tilde{x}_1,\cdots, \tilde{x}_{M-1}]$ and $X_f = [x_2,\cdots, x_M, \tilde{x}_2,\cdots, \tilde{x}_M]$.}
\item{Fix the dictionary functions ${\bf \Psi} = [\psi_1,\cdots , \psi_K]$.}
\item{Solve the optimization problem to obtain the approximate Koopman operator $\bf K$
\begin{eqnarray*}
\parallel {\bf G} {\bf K} - {\bf A} \parallel_F + \lambda \parallel {\bf K}\parallel_F.
\end{eqnarray*}
where
\begin{eqnarray*}
&&{\bf G}=\frac{1}{2M}\sum_{i=1}^{2M} \boldsymbol{\Psi}({ x}_i)^\top \boldsymbol{\Psi}({x}_i)\nonumber\\
&&{\bf A}=\frac{1}{2M}\sum_{i=1}^{2M} \boldsymbol{\Psi}({ x}_i)^\top \boldsymbol{\Psi}({ y}_{i}).
\end{eqnarray*}
}
\end{enumerate}
\label{algo}
\end{algorithm}

\begin{remark}
The optimization problem (\ref{worst_res_eqv}) can also be formulated as a Second Order Cone Problem (SOCP) as follows
\begin{eqnarray}
\begin{aligned}
&\min \qquad \qquad \quad \theta\\
&\textnormal{subject to } \parallel {\hat{G}} {\cal K} - {\hat{A}} \parallel \leq \theta - \tau,\\
&\qquad \qquad \quad \Bigg\Vert\begin{pmatrix}
{\cal K}\\
K
\end{pmatrix}\Bigg\Vert \leq \tau.
\end{aligned}
\end{eqnarray}
\end{remark}

The Sparse Koopman Learning Algorithm appends artificial data points to the training data sets and in the above discussion to each obtained data point we appended one artificial data point. However, to each obtained data point, one can append more than one artificial data point. In particular we have the following theorem.

\begin{theorem}
Let $x_{t+1}=T(x_t)$ be a dynamical system with $x_t\in\mathbb{R}^N$ and let \[\mathbf{\Psi}(\boldsymbol{x})=\begin{bmatrix}\psi_1(x) & \psi_2(x) & \cdots & \psi_K(x)\end{bmatrix}\] be a set of dictionary functions. Let $\{x_i\}$, $i=1,\cdots , {M+1}$ be the obtained data set. Then at each $x_i$ we can append $l$ artificial data points such that $l = \min \{N, \textnormal{rank}(J_i)\}$, where $J_i=\frac{\partial {\mathbf{\Psi}}}{\partial x}\rvert_{x_i}$ is the Jacobian of $\bf \Psi$ evaluated at $x_i$.
\end{theorem}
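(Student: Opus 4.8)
The plan is to interpret the claim through the question of \emph{how much genuinely new information an appended point contributes in feature space}, and then reduce the counting to the rank--nullity theorem applied to the Jacobian $J_i$. The governing observation is the same linearization used in the preceding Proposition: a perturbation $\delta x_i$ of the data point $x_i$ produces, to leading order, the feature-space displacement
\begin{equation*}
\mathbf{\Psi}(x_i+\delta x_i)=\mathbf{\Psi}(x_i)+J_i\,\delta x_i+\textnormal{h.o.t.},
\end{equation*}
so the leading-order contribution of an appended point is the vector $J_i\,\delta x_i\in\mathbb{C}^K$. An appended point is \emph{non-redundant} precisely when its feature displacement is linearly independent of those already present; hence the maximal number $l$ of useful artificial points at $x_i$ equals the maximal number of linearly independent vectors of the form $J_i\,\delta x_i^{(1)},\dots,J_i\,\delta x_i^{(l)}$.

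First I would establish the upper bound. Regard $J_i$ as the linear map $v\mapsto J_i v$ from $\mathbb{R}^N$ into $\mathbb{C}^K$. Every feature displacement lies in the range of this map, whose dimension is $\textnormal{rank}(J_i)$, so at most $\textnormal{rank}(J_i)$ of them can be linearly independent; trivially, at most $N$ independent perturbation directions $\delta x_i^{(j)}$ exist in $\mathbb{R}^N$. Therefore $l\le\min\{N,\textnormal{rank}(J_i)\}$.

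Next I would prove achievability. By rank--nullity, $\dim\ker J_i=N-\textnormal{rank}(J_i)$, and restricting $J_i$ to any complement of $\ker J_i$ gives an injection onto $\textnormal{range}(J_i)$. Choosing perturbation directions $\delta x_i^{(1)},\dots,\delta x_i^{(r)}$ with $r=\textnormal{rank}(J_i)$ that map to a basis of $\textnormal{range}(J_i)$, and scaling them so that $\|\delta x_i^{(j)}\|\le\lambda_X$, yields exactly $r$ artificial points whose feature displacements are linearly independent. This realizes $l=\min\{N,\textnormal{rank}(J_i)\}$ and completes the argument.

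The main obstacle is conceptual rather than computational: pinning down the correct criterion for an artificial point to ``count'' (linear independence of its feature-space displacement) and justifying that the leading-order term controls the count. Since the perturbations are confined to the small ball $B(x_i,\lambda_X)$, the Jacobian term dominates the remainder, and I would remark that for generic small $\delta x_i^{(j)}$ the exact displacements $\mathbf{\Psi}(x_i+\delta x_i^{(j)})-\mathbf{\Psi}(x_i)$ stay linearly independent whenever the linearized ones are, so the neglected higher-order terms do not affect the count. I would also note that, because $J_i$ is $K\times N$, one always has $\textnormal{rank}(J_i)\le N$, so the bound reduces to $l=\textnormal{rank}(J_i)$, with the $\min$ recording the separate input-space constraint.
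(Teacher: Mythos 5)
Your proof is correct and follows essentially the same route as the paper's: linearize $\mathbf{\Psi}$ at $x_i$ via Taylor expansion, regard $J_i$ as the (push-forward) linear map sending perturbation directions to feature-space displacements, and count how many independent images it can produce, which is $\textnormal{rank}(J_i)$. If anything, your write-up is slightly more careful than the paper's --- you split the argument into an upper bound plus achievability, address why the higher-order terms do not spoil independence for generic small perturbations, and observe that since $J_i$ is $K\times N$ one always has $\textnormal{rank}(J_i)\le N$, making the $\min$ in the statement redundant --- none of which the paper makes explicit.
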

\begin{proof}
Let $X=\{x_1,\cdots , x_{M+1}\}$ be the set of obtained data-points and consider a single data point $x_p\in X$. We assume that the dynamical system $x_{t+1}=T(x_t)$ evolves on the state space $M$, where by a slight abuse of notation we identify a point on the manifold $M$ by its vector representation $x_p$ in $\mathbb{R}^N$. Now, at each point $x_p$, we consider the tangent space $T_{x_p}\mathbb{R}^N$. Since the dynamical system evolves on $\mathbb{R}^N$, the tangent space $T_{x_p}\mathbb{R}^N\cong \mathbb{R}^N$. Hence at each point $x_p$ one can construct $N$ independent vectors $\delta x_i$, $i=1, \cdots , N$, such that $\parallel \delta x_i\parallel < \lambda$. 

Now, using taylor series expansion for $\bf \Psi$ for points around $x_p$, we have 
\[{\bf \Psi}(x_p+\delta x_i) = {\bf \Psi}(x_p) + \frac{\partial {\bf \Psi}}{\partial x}\rvert_{x_p}\delta x_i + h.o.t.\]
Hence the vectors $x_p+\delta x_i$ are lifted to the tangent space $T_{{\bf \Psi}(x_p)}\mathbb{R}^K$ by the dictionary function $\bf \Psi$. Note that this construction is exactly same as push-forward of tangent vectors from $T_x{\cal M}$ to $T_y{\cal N}$ by a differentiable map $f:{\cal M}\to {\cal N}$, such that $y=f(x)$. 

Let $J_p=\frac{\partial {\bf \Psi}}{\partial x}\rvert_{x_p}$, such that rank$(J_p)=r$. If $r=N$, then $J_p$ is injective and hence the independent vectors $x_p+\delta x_i$ are mapped to independent vectors. In this case, we can add $N$ artificial data-points around $x_p$ to the original data-set and these artificial data-points are exactly $x_p+\delta x_i$. This is because if we choose $N+1$ artificial data-points, then the data-points will be linearly dependent and hence adding them to the data-set will not change the rank of $\bf G$ or $\bf A$ matrices in the optimization problem (\ref{robust_opt}) and thus will not help is computation of the Koopman operator. Again, when $r<N$, only $r$ independent vectors from $T_{x_p}\mathbb{R^N}$ will be mapped to $r$ independent vectors in $T_{{\bf \Psi}(x_P)\mathbb{R}^K}$ and in this case one can add only $r$ artificial data-points to the obtained data-set around $x_p$.

\end{proof}

\section{Sparse Koopman Learning Algorithm and Learning Performance}\label{section_VC_analysis}

Computation of Koopman operator (EDMD algorithm) amounts to solving a least squares problem and if the data-set is small, the least squares solution is often ill-posed \cite{tikhonov2013numerical}. Hence, increasing the number of data points for training the algorithm is always favourable for efficient learning. 
The proposed Sparse Koopman Learning algorithm artificially increases the number of data points used to train the Koopman operator and this is possible because the dynamical systems from which the data is obtained is assumed to be at least $\mathcal{C}^1$. Hence, one can use the continuity argument that nearby points are mapped to nearby points, thus enriching the limited data-set artificially and the Koopman operator is obtained as a solution to a regularized least squares problem. In this section, we establish why the Sparse Koopman Algorithm works by using the concepts of Vapnik-Chervonenkis (VC) dimension and bias-variance trade-off \cite{vapnik_book,mostafa_learning_book}. 

\subsection{Artificial Data Points and Vapnik-Chervonenkis Dimension}
The Sparse Koopman Algorithm consists of two parts:
\begin{itemize}
\item{addition of artificial data points to the existing data-set, and}
\item{solving a regularized least squares problem.}
\end{itemize}
Both of these play important roles in obtaining an efficient Koopman operator.We analyze the role of addition of artificial data points by the Vapnik-Chervonenkis (VC) dimension. For simplicity, we define VC dimension for classification problem and later discuss how the concept can be generalized to linear regression problems.

\begin{definition}[Shattering and VC Dimension \cite{vapnik_book,mostafa_learning_book}]
Let ${\cal H}$ be a class of $\{\pm 1\}$-valued functions on space $\cal X$ . We say a set of $m$ points $\{x_1, . . . , x_m\} \subset \mathcal{X}$ is shattered by $\cal H$ if all possible $2^m$ binary labellings of the points can be realized by functions in $\cal H$. The VC dimension of $\cal H$, denoted by $d_{VC}(\mathcal{H})$, is the cardinality of the largest set of points in $\cal X$ that can be shattered by $\cal H$. If $\cal H$ shatters arbitrarily large sets of points in $\cal X$, then $d_{VC}(\mathcal{H})=\infty$ 
\end{definition}

\begin{figure}[htp!]
\centering
\includegraphics[scale=.5]{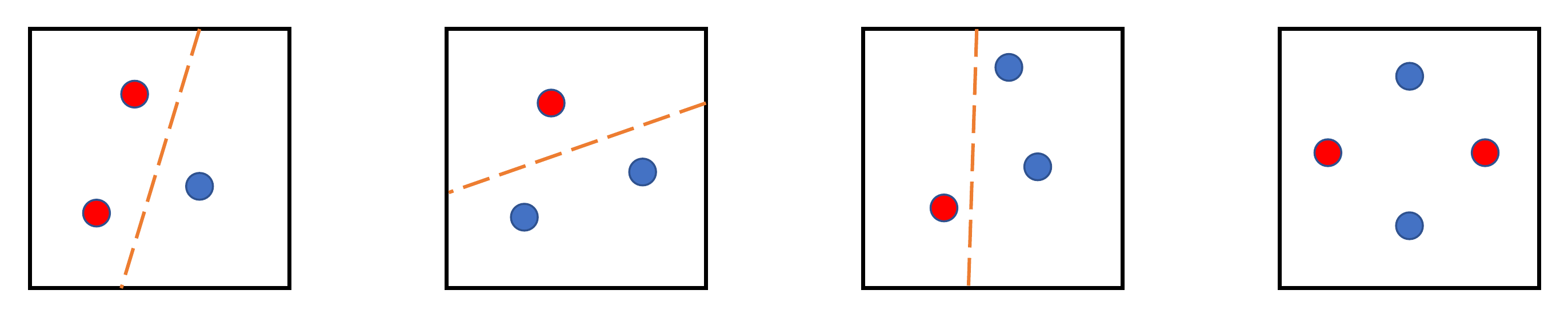}
\caption{All the three points in the left three figures can be shattered by a line in $\mathbb{R}^2$. However, in $\mathbb{R}^2$, 4 points, in general can not be shattered by a line. Hence the VC dimension is 3.}\label{vc_shattering}
\end{figure}

For example, consider points in 2-dimensional space, that is, $\mathbb{R}^2$ and consider the hypothesis set consisting of linear classifiers (Fig. \ref{vc_shattering}). In this case, any 3 points, which are not colinear, can be classified in red and blue categories. However, if there are 4 points, then the configuration shown in rightmost box of Fig. \ref{vc_shattering} can not be classified by a linear classifier. Hence, the VC dimension of linear classifiers on $\mathbb{R}^2$ is 3. In general, the VC dimension of linear classifiers on $\mathbb{R}^d$ is $(d+1)$.

The above definition of VC dimension for dyadic functions can be extended to regression problems (or any real valued function) as follows. Consider real-valued functions $\{f(x,\theta)\}$ (here $\theta$ is the space of parameters) taking values $y$. With this one can construct dyadic functions

\begin{equation}\label{dyadic_construction}
  f_y(x,\theta) =
    \begin{cases}
      1 & \text{if $f(x,\theta)-y>0$}\\
      -1 & \text{if $f(x,\theta)-y\leq 0$}.
    \end{cases}       
\end{equation}
With this, the VC dimension of $f_y(x,\theta)$ is defined as before. 

\begin{figure}[htp!]
\centering
\includegraphics[scale=.6]{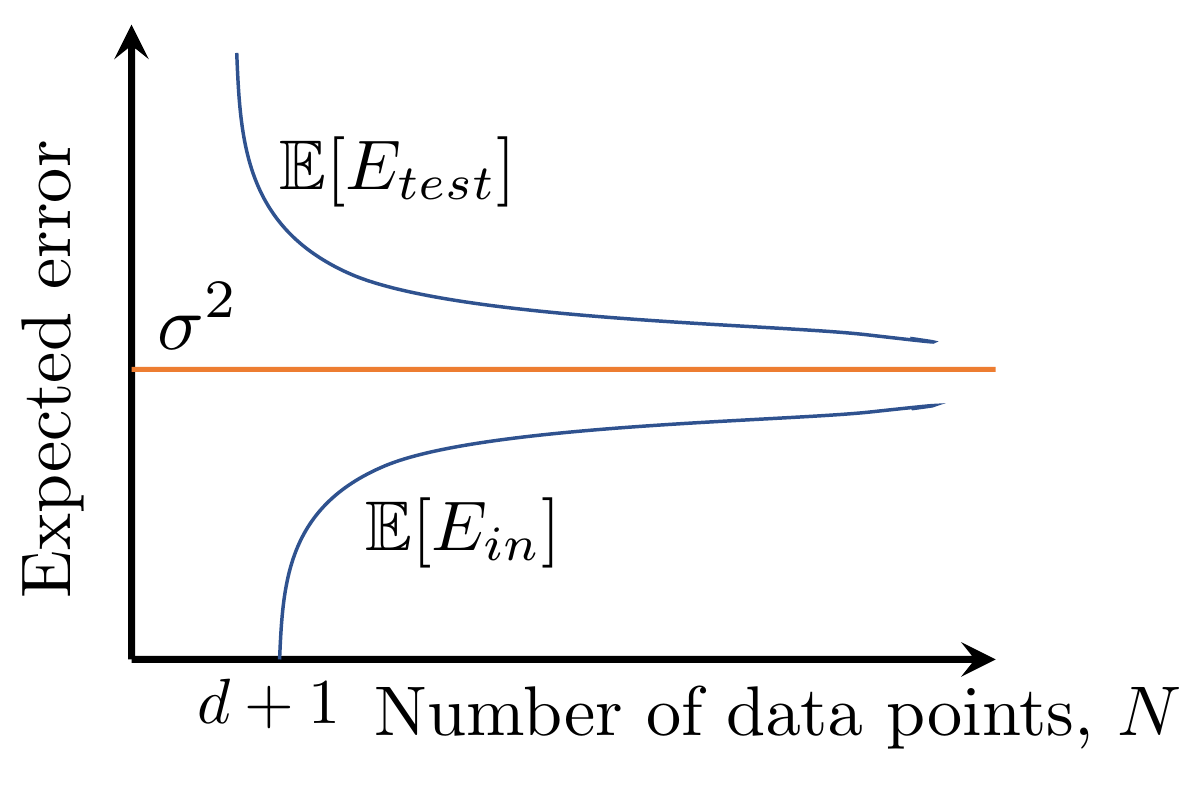}
\caption{Learning curve for linear regression}\label{learning_curve}
\end{figure}

Now, let $\{(x_1,y_1), (x_2,y_2),\cdots , (x_N,y_N)\}$ be a data set with $x_i\in\mathbb{R}^d$, such that $y=w^{*\top}+\epsilon$, where $\epsilon$ is the noise term with zero mean and variance $\sigma^2$. The optimal weight vector $w^{*\top}$ is given by $w^{*\top} = X^\dagger y$ where $X^\dagger$ is the pseudo-inverse of $X = [x_1,x_2,\cdots ,x_N]^\top$ and $y = [y_1,y_2,\cdots ,y_N]^\top$. With this the expected value of the in-sample error $(E_{in})$ is given by \cite{mostafa_learning_book}
\begin{eqnarray}\label{train_error}
\mathbb{E}[E_{in}] = \sigma^2(1-\frac{d}{N})
\end{eqnarray}
and the expected value of the test error is 
\begin{eqnarray}\label{test)_error}
\mathbb{E}[E_{test}] = \mathbb{E}[E_{in}] + O\left(\frac{d}{N}\right),
\end{eqnarray}
where $\mathbb{E}[\cdot]$ denotes the expectation operator. With this, the best linear fit has expected error $\sigma^2$ and this fit is attained as the number of data-points $N$ becomes large. This is shown in Fig. \ref{learning_curve}.

Hence it is always desired to have a large number of training sample to obtain good linear fit, thus resulting in a model which generalizes well to test data. However, in many real life situations, it may not be possible to obtain large training data-sets and the goal of this paper is to propose an algorithm that can tackle such situations. To this end, under the assumption that the underlying dynamical system map is at least $\mathcal{C}^1$, we proposed a prescription which enlarges the existing training data-set by appending artificial data points. Hence, by (\ref{train_error}), our proposed algorithm reduces the training error and in the process a more accurate Koopman operator is obtained. However, our algorithm uses ideas from robust optimization and it results in a regularized linear regression problem (\ref{robust_opt}) and this poses a problem for VC analysis. In particular, with change in the regularization parameter $\lambda$, the learning algorithm changes, but the hypothesis set remains the same and hence the VC dimension remains the same. However, as the regularization parameter $\lambda$ is increased, it makes the weights of the linear regression model more constrained. In particular, the unconstrained optimization problem (\ref{robust_opt}) can be recast as a constrained optimization problem as
\begin{eqnarray}
\begin{aligned}
& \min \qquad \parallel {\bf G} {\bf K} - {\bf A}\parallel_F\\
& \textnormal{subject to } \parallel {\bf K}\parallel_F \leq C,
\end{aligned}
\end{eqnarray}
where $C$ is related to the regularization parameter $\lambda$, such that when $\lambda$ increases, $C$ decreases and vice versa. Hence, when $\lambda$ is increased, the optimization variable ${\bf K}$ is being constrained more and more and hence correspond to a smaller model (the set of allowable weights decreases in size) and thus we expect better generalization for a small increase in $\mathbb{E}[E_{in}]$, even though the VC dimension remains same. In such a situation, for regularized linear regression, a heuristic concept of ``effective VC dimension" is used instead of normal VC dimension \cite{mostafa_learning_book}. However, there are multiple definitions of ``effective VC dimension" in literature \cite{mostafa_learning_book}, but they all establish the fact that if the number of training data-points are increased the performance of the learning algorithm improves \cite{mostafa_learning_book}. Thus addition of extra data points to the original training data do improve the efficiency of the Sparse Koopman Algorithm. However, the artificial data points are considered as noisy observations and it degrades the performance of the algorithm and to take care of the noisy observations, we use robust optimization techniques. In particular, we use regularized least squares to account for the noisy observations and the role of regularization in the Sparse Koopman Algorithm can be studied via the Bias-Variance Trade-off.

\subsection{Regularization and Bias-Variance Trade-off}

The VC dimension depends on the hypothesis set $\cal H$ and it shows that the choice of $\cal H$ leads to a trade-off between the approximation of the target function on the training set and the performance of the obtained function on the test data. In particular, if $\cal H$ is too simple, it may lead to a large training error and if the hypothesis set is too complex, it may to lead to overfitting and thus lead to large test error. This is known as Bias-Variance trade-off \cite{mostafa_learning_book}. The intuition of bias-variance trade-off is explained in Figure \ref{bias_variance}.  Usually, with a highly complex model, it is possible to fit the training data as closely as possible.  In this case, the training error is extremely small and the model is said to have a low bias.  However, the highly complex model may not generalize well to the test data, thus making the test error large.  This is due to the overfitting of the training data.  The complex model, which overfits the training data and produces high test error, is said to have high variance.  This situation is often reversed if the model considered is fairly simple.

\begin{figure}[htp!]
\centering
\includegraphics[scale=.3]{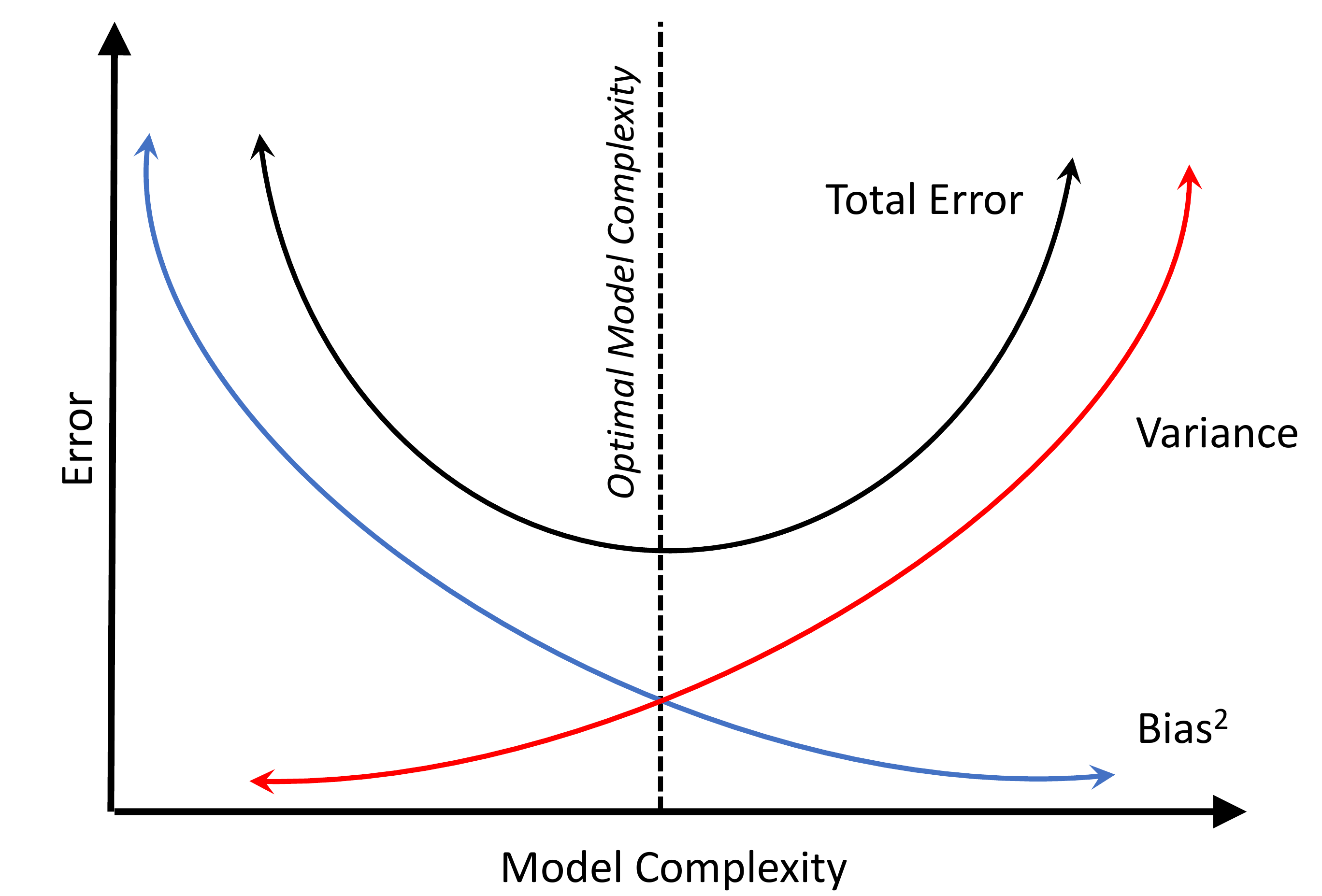}
\caption{Bias-Variance Trade-off and model complexity.}\label{bias_variance}
\end{figure}

For a linear regression problem, it can be shown that the error on a test data-point can be decomposed into a sum of bias squared and variance \cite{mostafa_learning_book}. Balancing this bias-variance trade-off is at the heart of developing a successful model and the regularization parameter $\lambda$ acts as the parameter which achieves this balance. In particular, increase in the regularization parameter $\lambda$ puts more emphasis on the norm of the parameters in the regularized least square optimization problem (\ref{robust_opt}) and thus shrinks the parameters towards zero. This leads to a higher bias, which is undesirable, but on the other hand, it reduces the variance and thus there exists a $\lambda$ which achieves the perfect balance between the bias and variance, leading to the best model.

In the Sparse Koopman Algorithm, the regularization of the standard EDMD algorithm achieves this bias-variance trade-off by acting against overfitting the Koopman operator to the noisy artificial data-points and thus resulting in an efficient Koopman operator computation. Hence, to summarize, in the Sparse Koopman Algorithm, addition of extra data-points help to make the least square optimization problem well-posed and the regularization help against overfitting the noisy data and yield a more accurate Koopman operator.

\section{Design of Robust Predictor}\label{section_predictor}
The Koopman operator generates a linear system in a higher dimensional space, even if the underlying system is linear. The linearity of the operator enables the design of linear predictors for nonlinear systems. The following is presented briefly for the self-containment of the paper and for details the readers are referred to \cite{korda_mezic_predictor}. Let $\{x_0,\ldots,x_M\}$ be the training data-set and $\bf K$  be the finite-dimensional approximation of the transfer Koopman operator obtained using algorithm ${\bf 1}$. Let $\bar x_0$ be the initial condition from which the future is to be predicted. The initial condition from state space is mapped to the feature space using the same choice of basis function used in the robust approximation of Koopman operator i.e., \[\bar x_0\implies {\bf \Psi}(\bar x_0)^\top=: {\bf z}\in \mathbb{R}^K.\] This initial condition is propagated using Koopman operator as \[{\bf z}_n={\bf K}^n{\bf z}.\]
The predicted trajectory in the state space is then obtained as 
\[\bar x_n=C {\bf z}_n\]
where matrix $C$ is obtained as the solution of the following least squares problem
\begin{eqnarray}\label{C_pred}
\min_C\sum_{i = 1}^M \parallel x_i - C \boldsymbol \Psi (x_i)\parallel_2^2
\end{eqnarray}

\section{Simulations}\label{section_simulation}

In this section, we demonstrate the efficiency of the proposed algorithm on three different dynamical systems. In particular, we construct the Koopman operator for a linear system, a non-linear system and a system governed by a Partial Differential Equation (PDE). 
\subsection{Network of Coupled Oscillators}
Consider a network of coupled linear oscillators given by
\begin{eqnarray}\label{coup_osc}
\ddot{\theta}_k &=& -\mathcal{L}_k\theta - d\dot{\theta}_k, \quad k = 1,\cdots , N
\end{eqnarray}
where $\theta_k$ is the angular position of the $k^{th}$ oscillator, $N$ is the number of oscillators, $\mathcal{L}_k$ is the $k^{th}$ row of the Laplacian $\mathcal{L}$ and $d$ is the damping coefficient. The Laplacian $\cal L$ is chosen such that the network is a ring network with 20 oscillators (Fig. \ref{fig_oscillator}).

\begin{figure}[htp!]
\centering
\includegraphics[scale=.6]{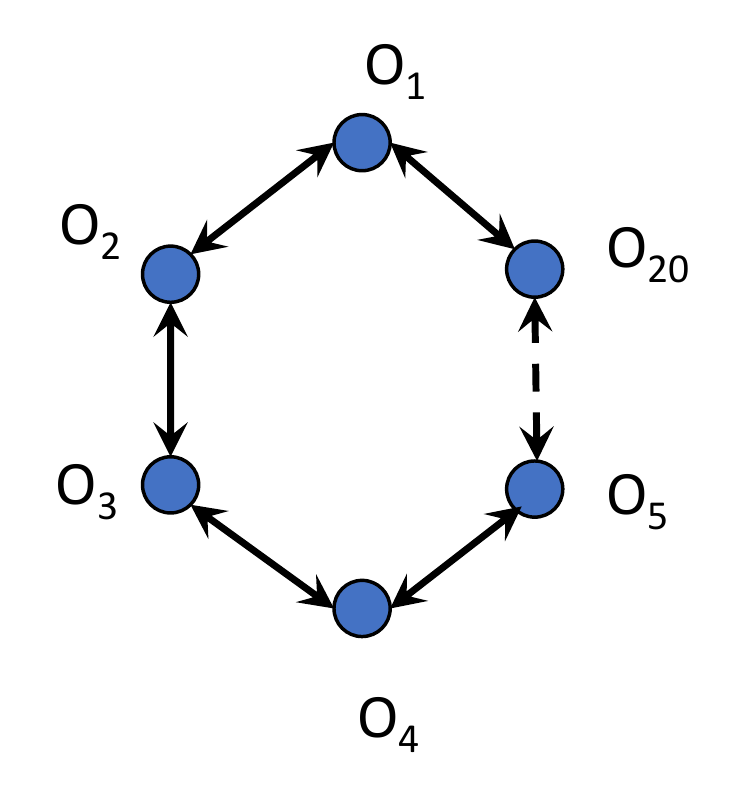}
\caption{Ring network of 20 linear oscillators.}\label{fig_oscillator}
\end{figure}

In these sets of simulations, the damping coefficient $d$ has been assumed the same for all the oscillators and is set equal to $0.4$. Data for all the states were collected for 100-time steps, with sampling time $\delta t = 0.01$ seconds and since the system is linear, linear basis functions were used for computation of the Koopman operator. 
\begin{figure}[htp!]
\centering
\subfigure[]{\includegraphics[scale=.31]{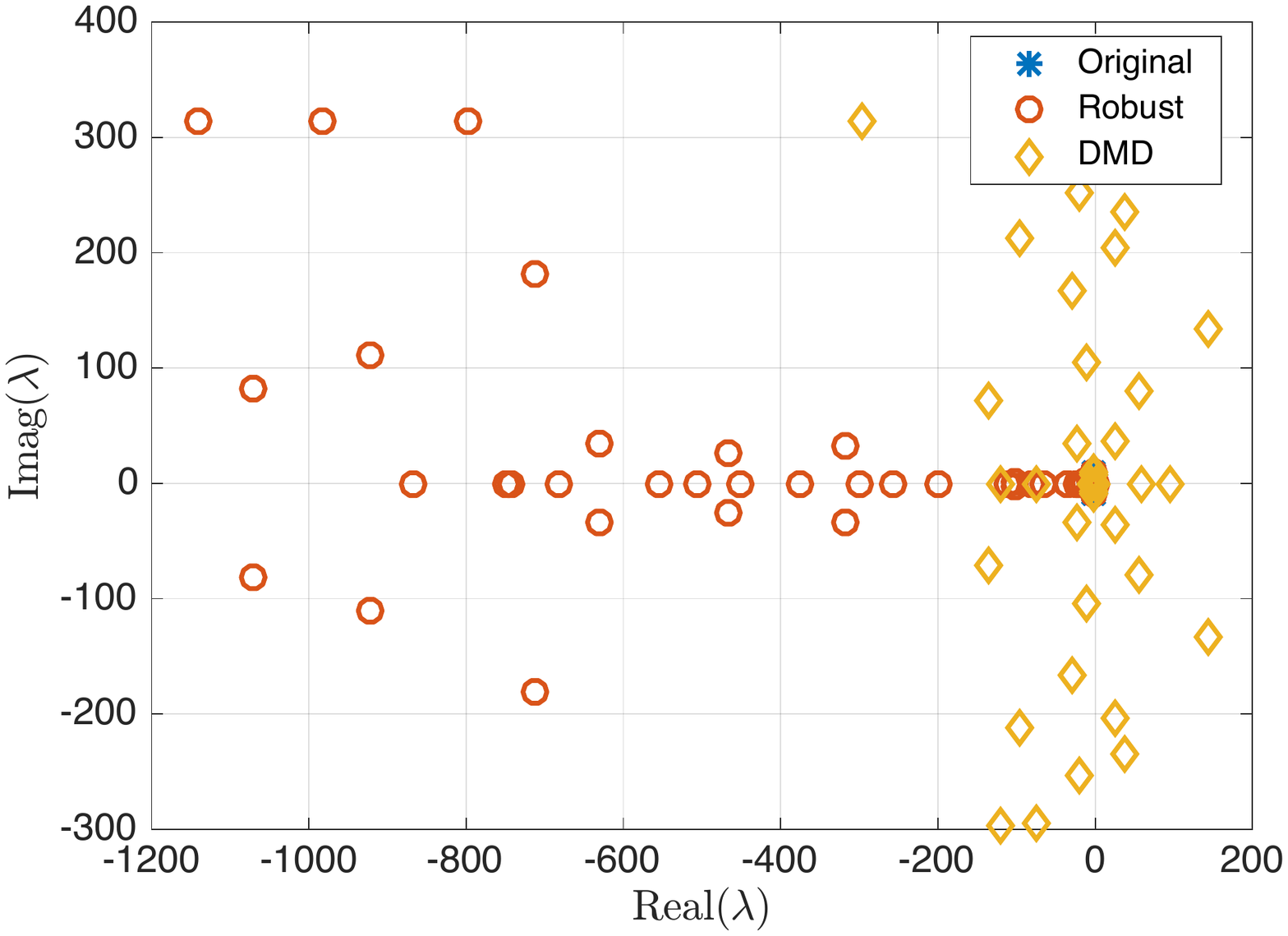}}
\subfigure[]{\includegraphics[scale=.31]{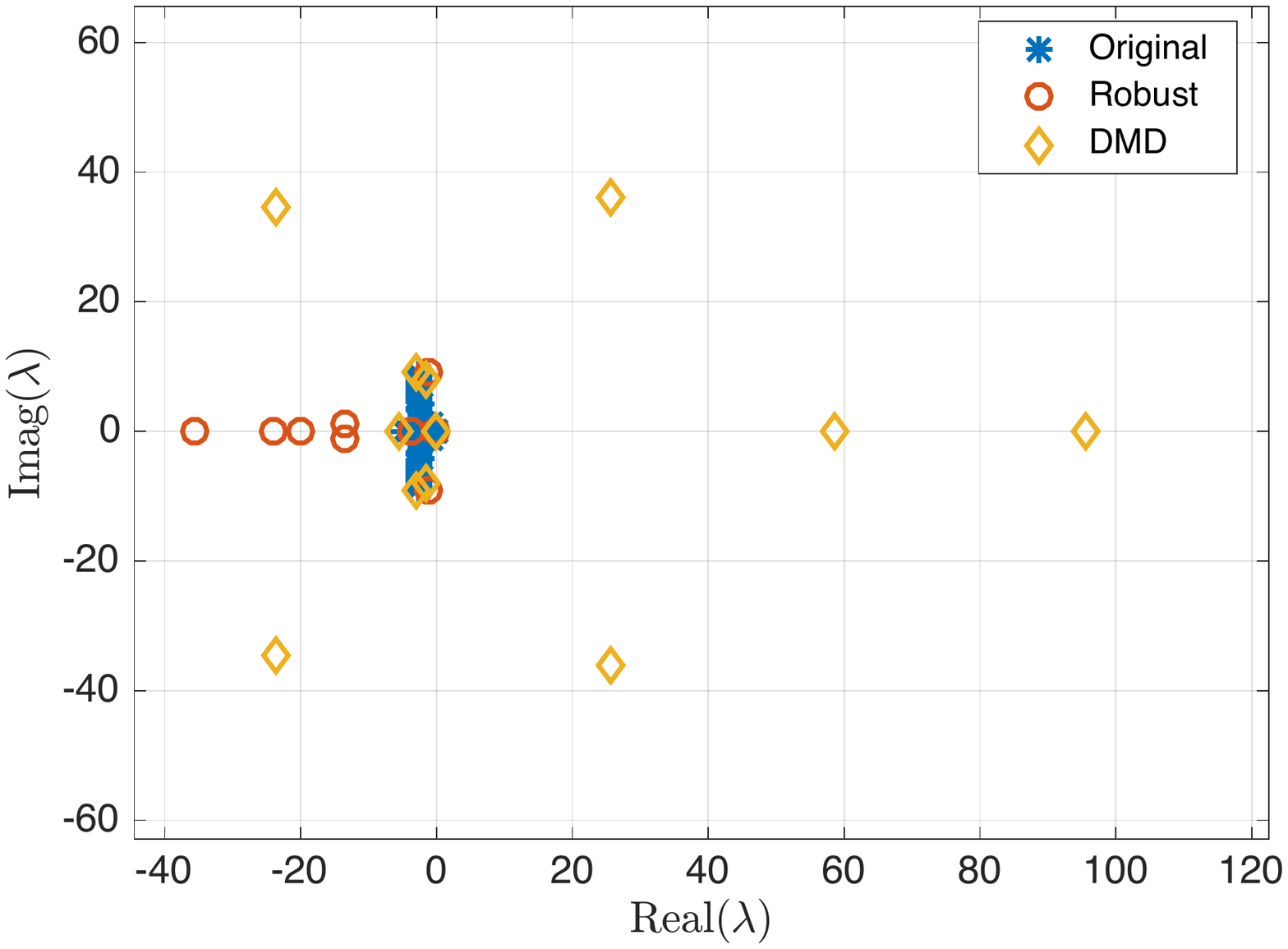}}
\caption{(a) Eigenvalue obtained using normal DMD on original training data and Robust DMD on enriched data set. (b) Dominant eigenvalues.}\label{eig_osc}
\end{figure}
The first 15-time steps data was used for training the Koopman operators. Normal DMD on the 15 data points yields positive eigenvalues with a significant real part, as shown in Fig. \ref{eig_osc}. For the Robust identification of Koopman operator, the original data set was enriched by adding 30 artificial data points and Robust DMD formulation (algorithm \ref{algo}) yields a much better approximation of the eigenvalues for the original system. The eigenvalues obtained using normal DMD and Robust DMD are shown in Fig. \ref{eig_osc}, wherein Fig. \ref{eig_osc}a the complete spectrum is plotted and in Fig. \ref{eig_osc}b the dominant eigenvalues are shown. 

\begin{figure}[htp!]
\centering
\subfigure[]{\includegraphics[scale=.315]{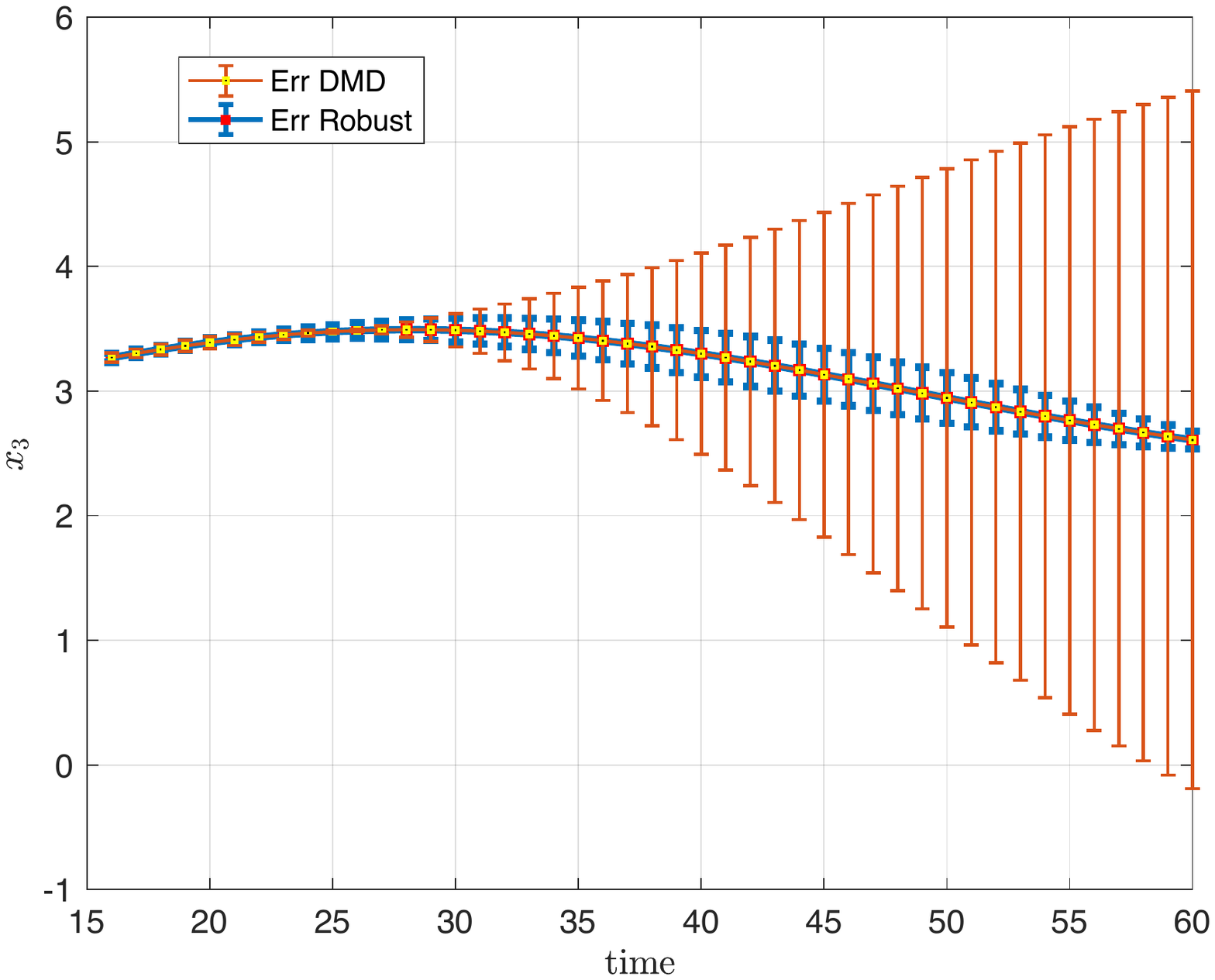}}
\subfigure[]{\includegraphics[scale=.315]{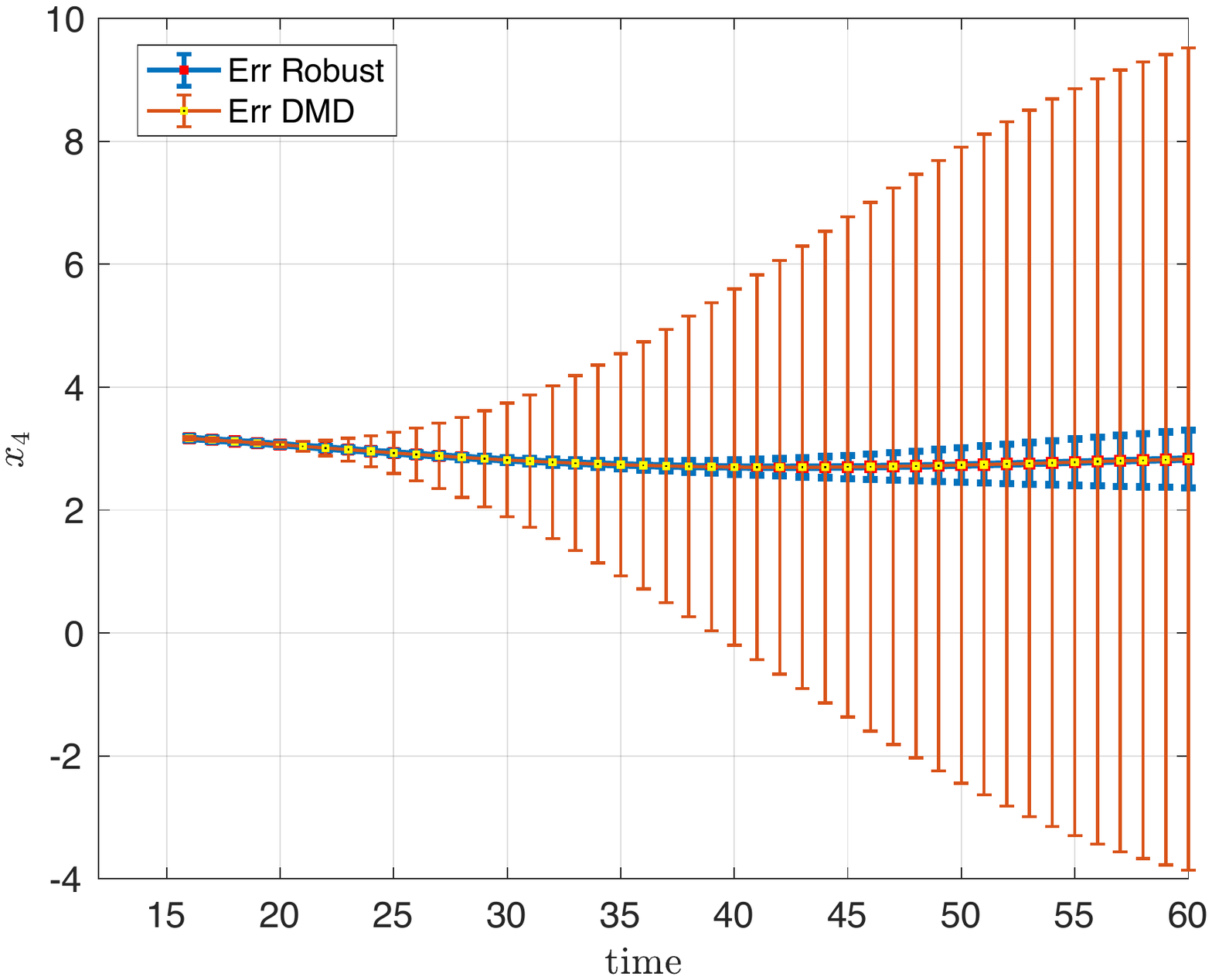}}
\caption{(a) Errors in the prediction of position of oscillator 3. (b) Errors in the prediction of the position of oscillator 4.}\label{err_osc}
\end{figure}
As mentioned earlier, data were obtained for 100 times steps and the first 15 time steps were used for training the Koopman operator. Koopman operators thus obtained was used to predict the next 45 time steps and was used to compare the error. The errors in the prediction of the positions of oscillators 3 and 4, using both normal DMD and Robust DMD, are shown in Fig. \ref{err_osc}a and Fig. \ref{err_osc}b respectively. It can be observed that Robust DMD formulation generates much smaller error compared to normal DMD. In fact, this was expected, since Robust DMD with enriched data-set approximates the eigenspectrum much better compared to normal DMD.

\subsection{Stuart-Landau Equation}
The nonlinear Stuart-Landau equation on a complex function $z(t) = r(t)\exp (i\theta(t))$ is given by 
\begin{eqnarray}\label{stu_lan}
\dot{z} = (\mu + i\gamma)z - (1+ i\beta)|z|^2z ,
\end{eqnarray}
where $i$ is the imaginary unit. The solution of (\ref{stu_lan}) evolves on the limit cycle $|z| = \sqrt{\mu}$. Hence, the continuous time eigenvalues lie on the imaginary axis. The discretized version of (\ref{stu_lan}) is

{\small
\begin{eqnarray}\label{stu_lan_dis}
\begin{pmatrix}
r_{t+1}\\
\theta_{t+1}
\end{pmatrix} = \begin{pmatrix}
r_t + (\mu r_t -r_t^3)\delta t\\
\theta_t + (\gamma - \beta r_t^2)\delta t
\end{pmatrix}
\end{eqnarray}
}

The set of dictionary functions were chosen as
\begin{eqnarray}\label{stuart_dic}
{\bf \Psi}(\theta_t) = \begin{pmatrix}
e^{-10i\theta_t} & e^{-9i\theta_t} & \cdots & e^{9i\theta_t} & e^{10i\theta_t}
\end{pmatrix}
\end{eqnarray}
 and data was collected for 150 time steps, with $\delta t = 0.01$ and initial condition $(1,\pi)$. 

\begin{figure}[htp!]
\centering
\subfigure[]{\includegraphics[scale=.35]{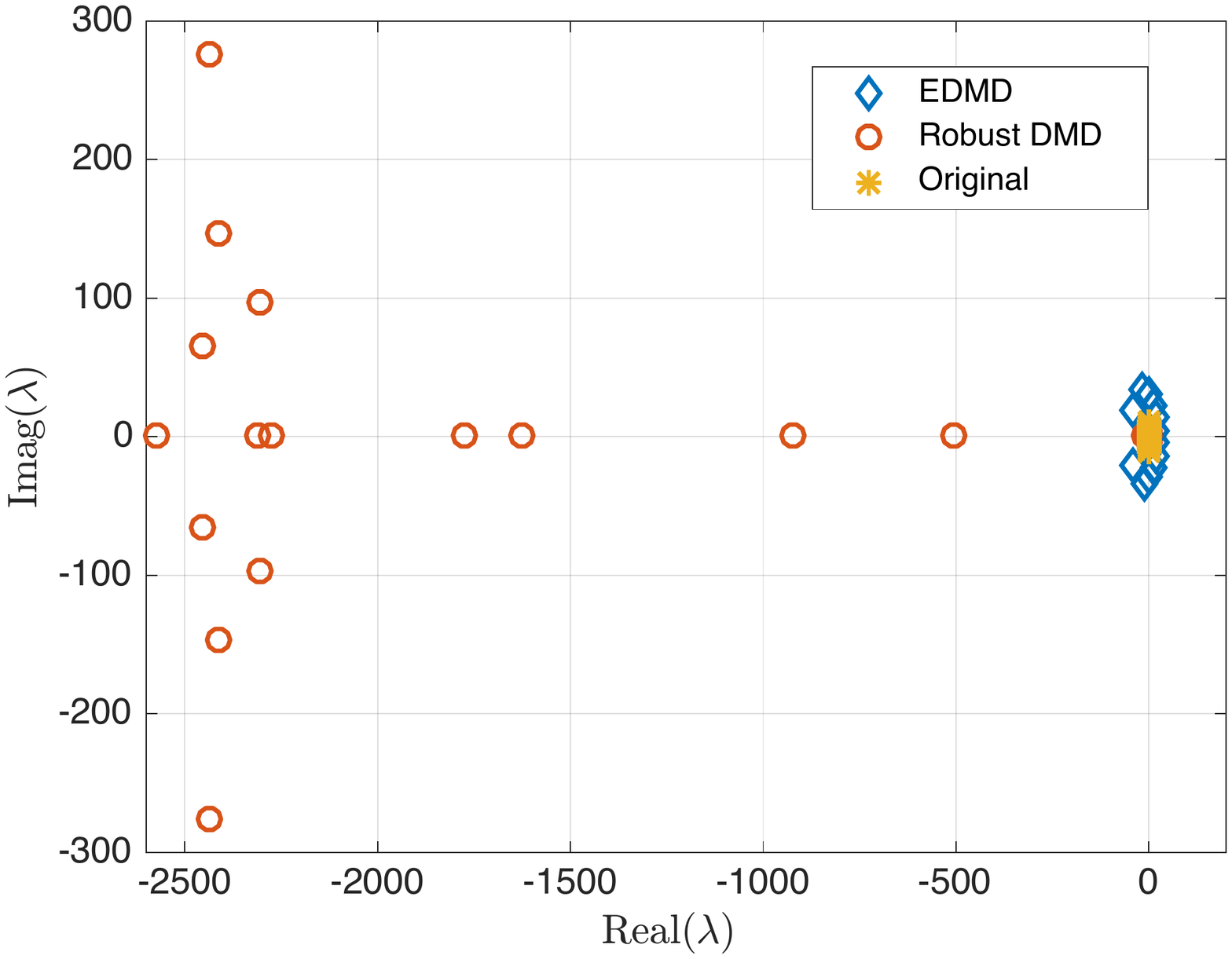}}
\subfigure[]{\includegraphics[scale=.35]{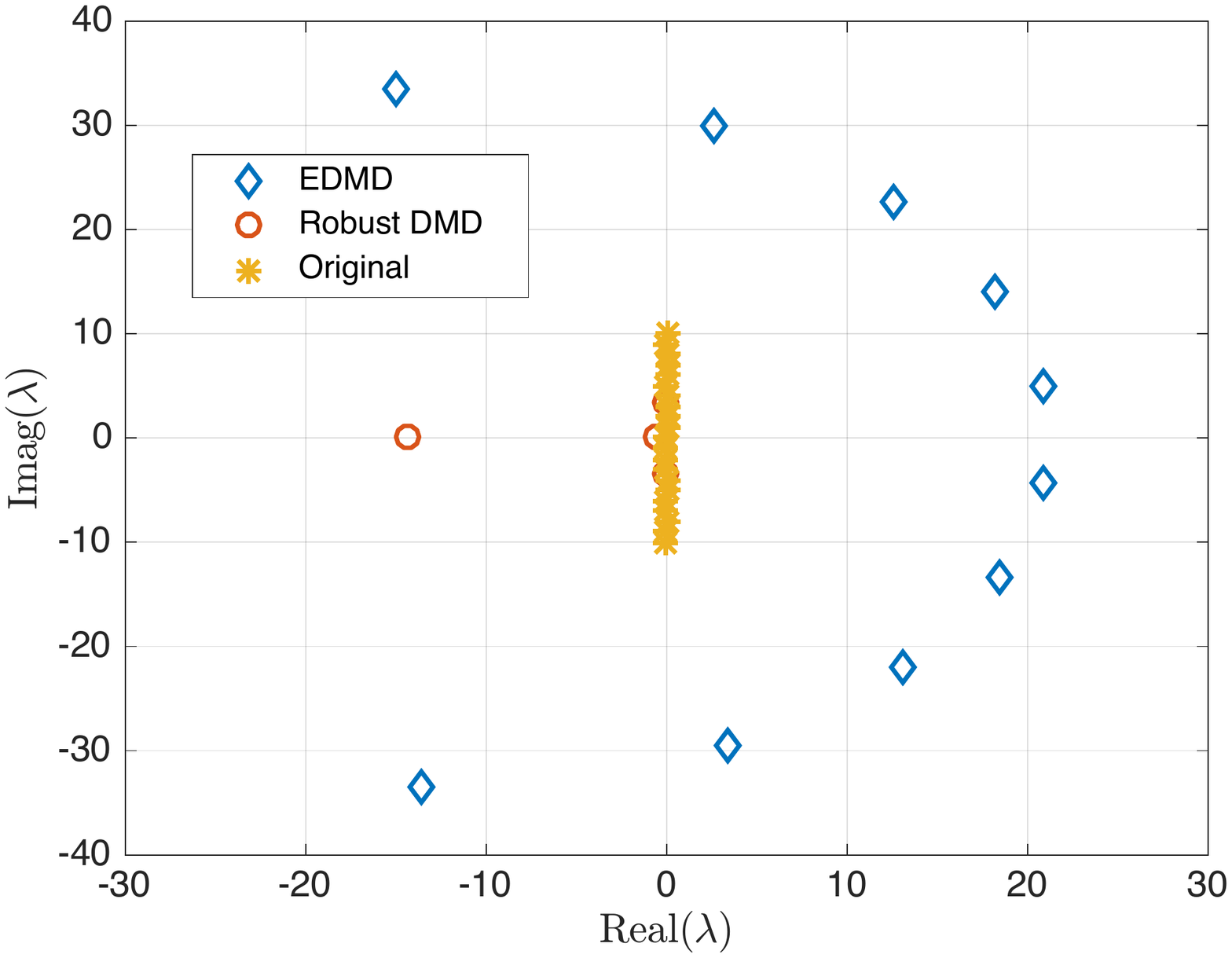}}
\caption{(a) Eigenvalues obtained with EDMD on original data and Robust EDMD on enriched data set. (b) Dominant eigenvalues.}\label{eig_nonlinear}
\end{figure}

The first 30-time steps data were used as the training data for training the Koopman operator. An extra 30 artificial points were added to the obtained data set to form the enriched data set and this enriched data set was used to compute the eigenspectrum of the Koopman operator using Robust EDMD algorithm. The eigenvalues obtained using the dictionary functions given in (\ref{stuart_dic}), with normal EDMD and Robust EDMD with enriched data set is shown in Fig. \ref{eig_nonlinear}a. Fig. \ref{eig_nonlinear}b shows the dominant eigenvalues and it can be observed that Robust EDMD provides a better approximation of the original eigenspectrum. In particular, normal EDMD generates unstable eigenvalues.

\begin{figure}[htp!]
\centering
\includegraphics[scale=.45]{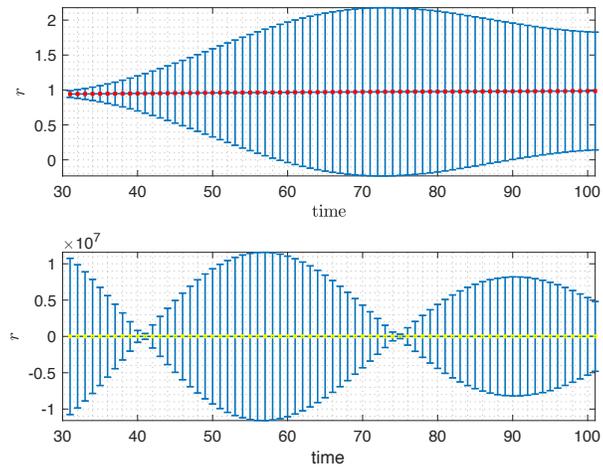}
\caption{Comparison of errors in prediction of $r$ using Robust EDMD and normal EDMD. The top figure shows the prediction error using Robust EDMD and the lower plot shows prediction error using normal EDMD.}\label{prediction_r}
\end{figure}

\begin{figure}[htp!]
\centering
\includegraphics[scale=.45]{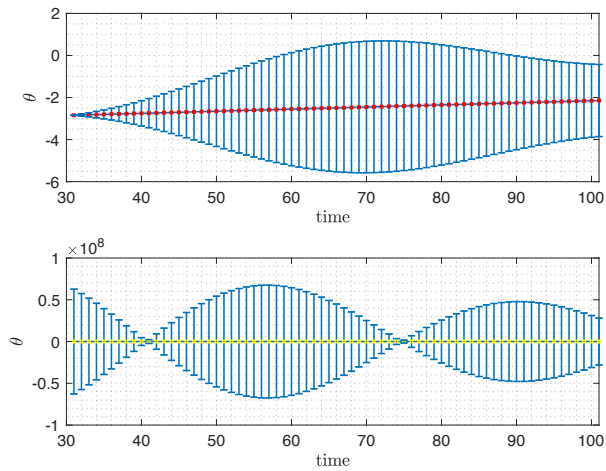}
\caption{Comparison of errors in prediction of $\theta$ using Robust EDMD and normal EDMD. The top figure shows the prediction error using Robust EDMD and the lower plot shows prediction error using normal EDMD.}\label{prediction_theta}
\end{figure}

Further, using the Koopman operators obtained using both normal EDMD and Robust EDMD, future values of both $r$ and $\theta$ was predicted for the next 70 time steps. The errors in the prediction of $r$ and $\theta$ are shown in Fig. \ref{prediction_r} and Fig. \ref{prediction_theta} respectively. In all the error plots, the errors are plotted against the actual values of $r$ and $\theta$ and it can be observed that the errors in prediction for both $r$ and $\theta$ with Robust EDMD are significantly smaller than the prediction errors using normal EDMD.

\subsection{Burger-Equation}
The third example considered in this paper is the Burger equation. Burger equation is a successful but simplified partial differential equation which describes the motion of viscous compressible fluids. The equation is of the form 
\[\partial_t u(x,t)+u\partial_x u=k\partial_x^2u\]
where $u$ is the speed of the gas, $k$ is the kinematic viscosity, $x$ is the spatial coordinate and $t$ is time. 

In the simulation, choosing $k=0.01$, we approximated the PDE solution using the Finite Difference method \cite{KUTLUAY1999251} with the initial condition $u(x,0)=sin(2\pi x)$ and Dirichet boundary condition $u(0,t)=u(1,t)=0$. Given the spatial and temporal ranges, $x\in[0,1],\;t\in[0,1]$, the discretizaion steps are chosen as $\Delta t=0.02$ and $\Delta x=1\times10^{-2}$. With the above set of conditions, the flow $u$ is shown in Fig. \ref{burger_flow}.
\begin{figure}[htp!]
\centering
\includegraphics[scale=.4]{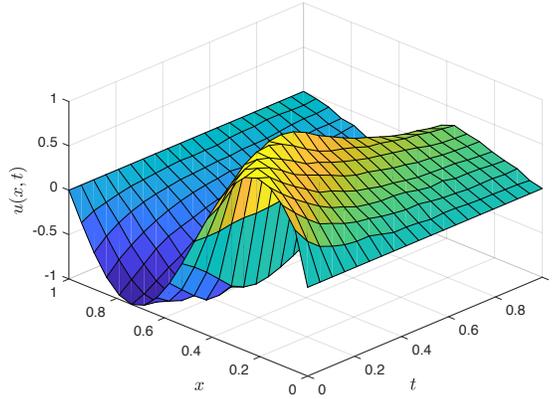}
\caption{Flow field of Burger equation.}\label{burger_flow}
\end{figure}

Since the space discretization was chosen as $\Delta x = 1\times 10^{-2}$, there are 100 state variables. For computing the Koopman operator, 8-time steps data were used. 40 extra data points were added to enrich the data set and the Robust Koopman operator was computed using the enriched data set. Koopman operator using normal DMD was also computed for comparing the errors in prediction. The errors in the prediction of 35 future time steps for $x_{40}$ and $x_{100}$ is shown in Fig. \ref{burger_error}(a) and Fig.. \ref{burger_error}(b) respectively. It can be seen that the error in prediction using Robust Koopman operator from the enriched data set is much smaller as compared to the normal DMD.

\begin{figure}[htp!]
\centering
\subfigure[]{\includegraphics[scale=.355]{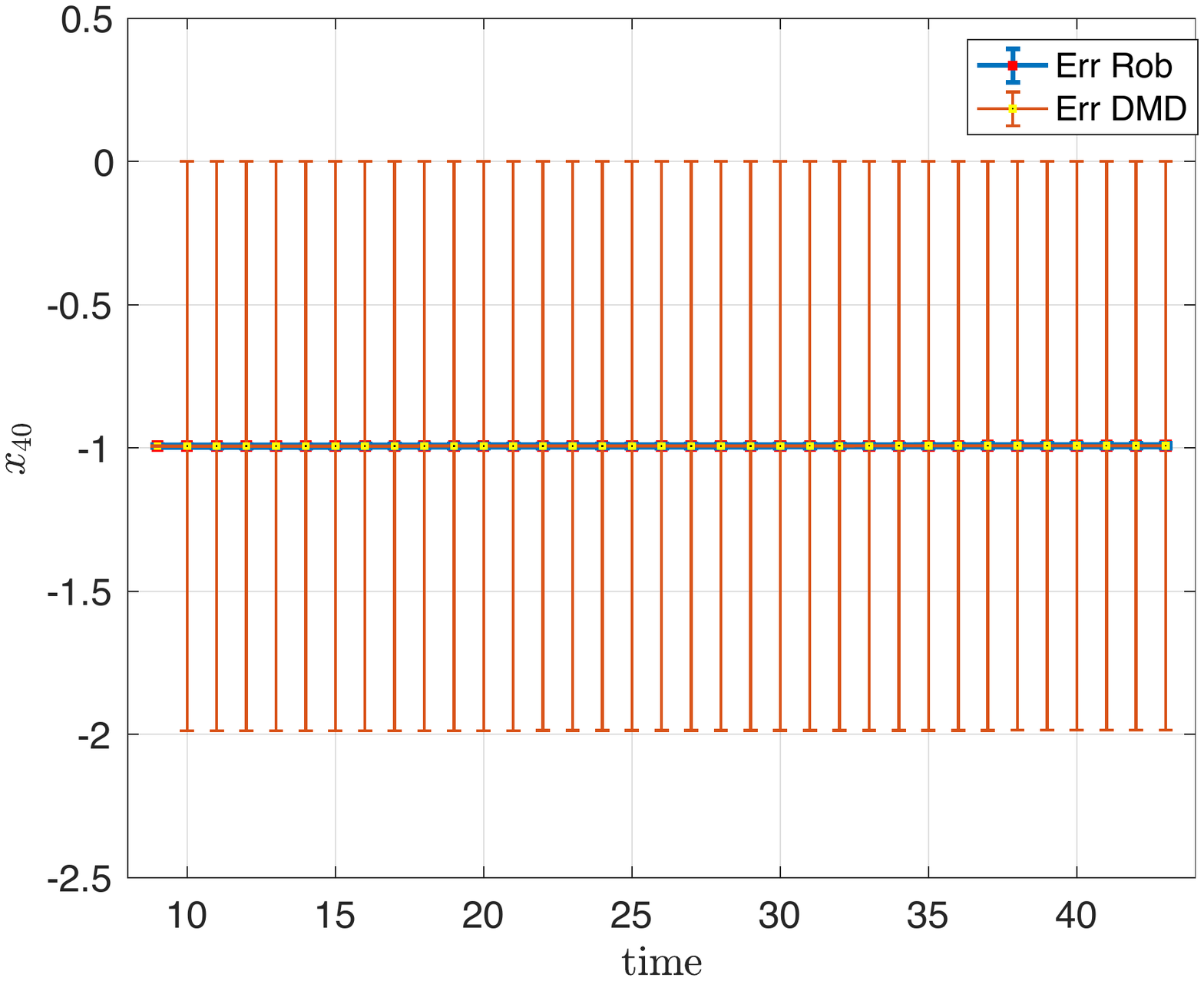}}
\subfigure[]{\includegraphics[scale=.35]{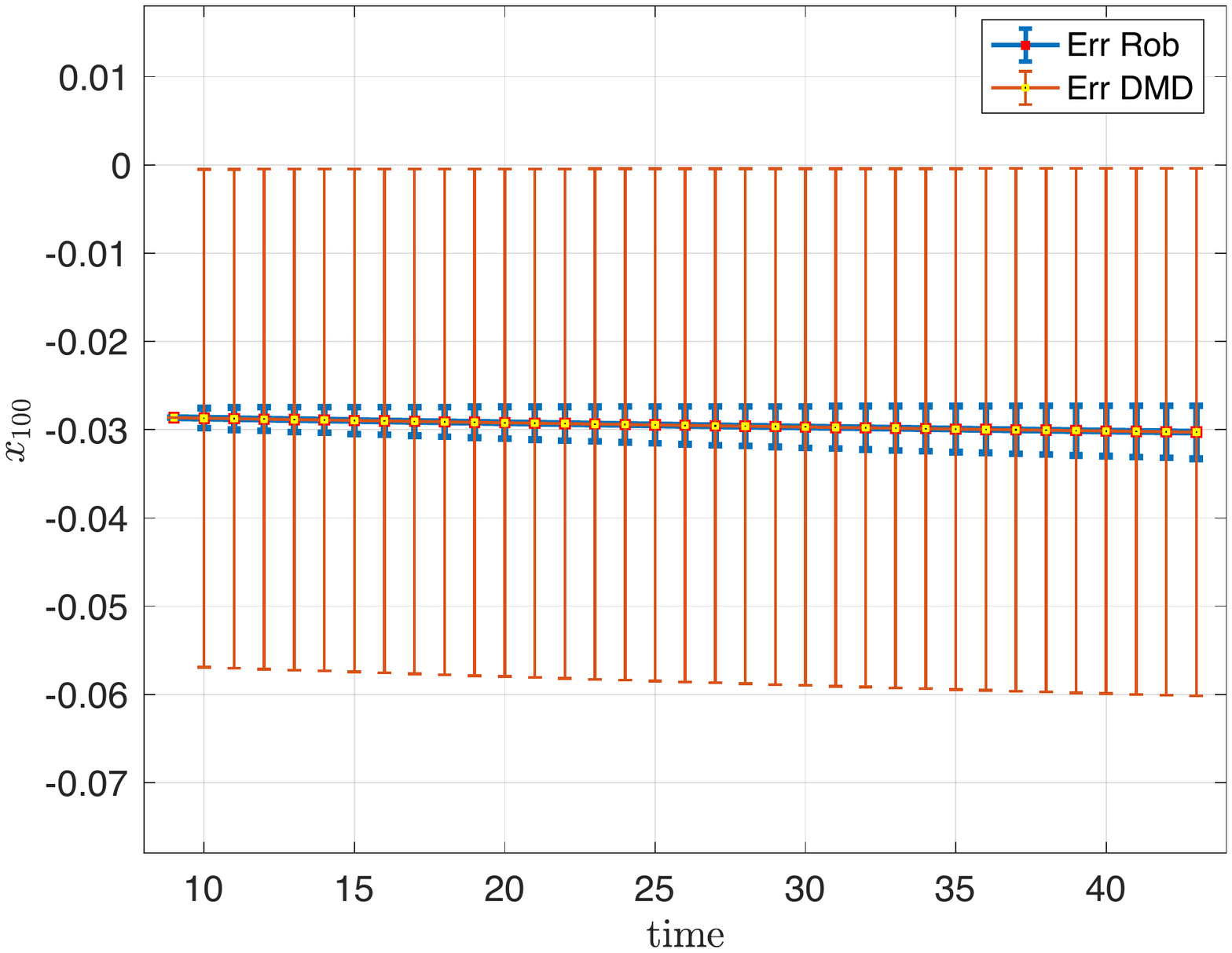}}
\caption{(a) Errors in prediction of $x_{40}$. (b) Errors in prediction of $x_{100}$.}\label{burger_error}
\end{figure}

We further used different training size data for computing the Koopman operator and compared the mean square error in prediction of all the states. In particular, we used both Robust DMD approach and normal DMD to predict 35-time steps from $t=100$, with 7 different training size data, namely 5, 10, 15, 20, 25, 30 and 35-time steps. For each of the training size data, we appended the data set with artificial data points so that there are 40 data points in total. The mean square errors in the prediction of the states are shown in Fig. \ref{burger_mse}. 

\begin{figure}[htp!]
\centering
\subfigure[]{\includegraphics[scale=.45]{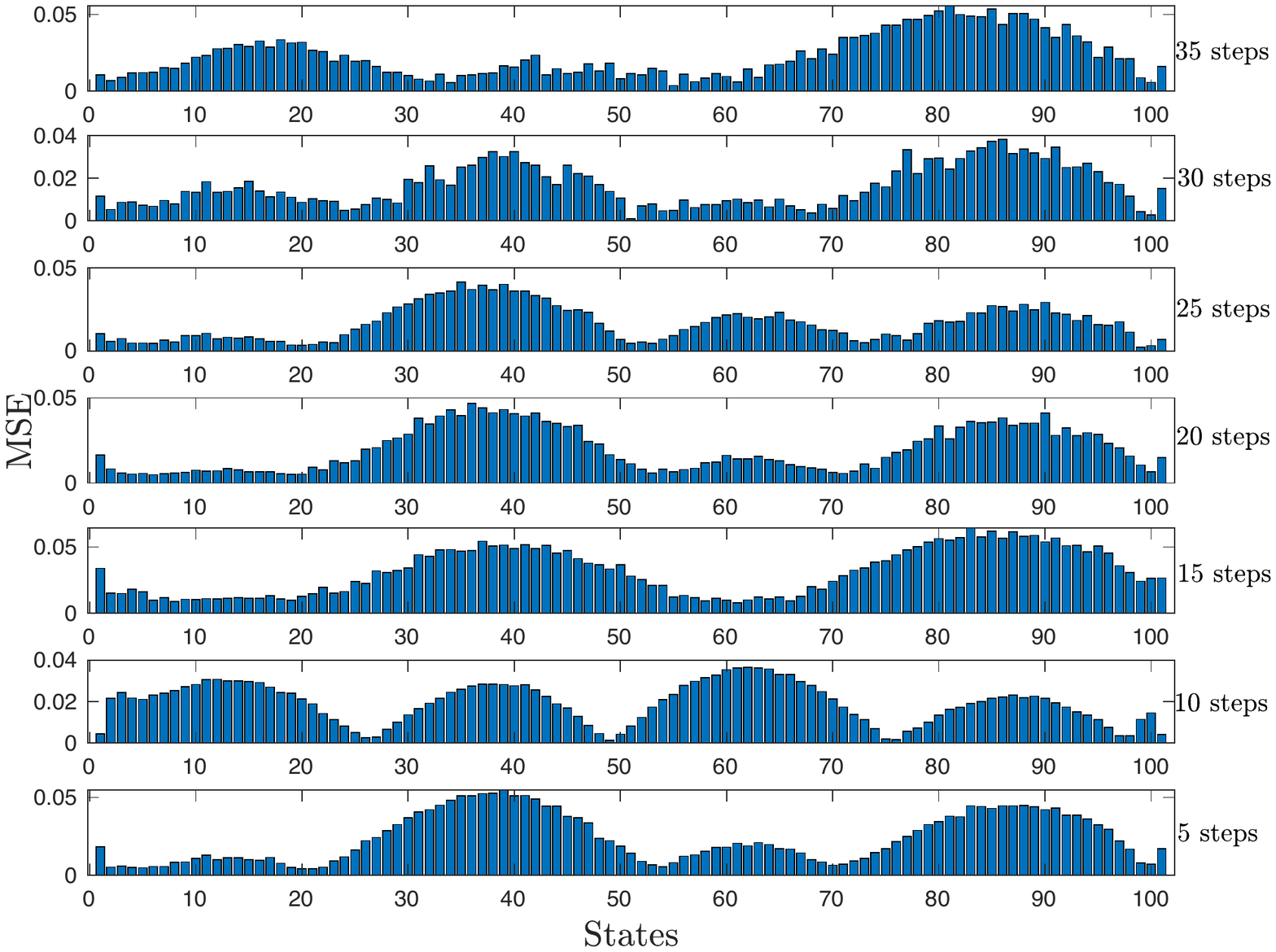}}
\subfigure[]{\includegraphics[scale=.45]{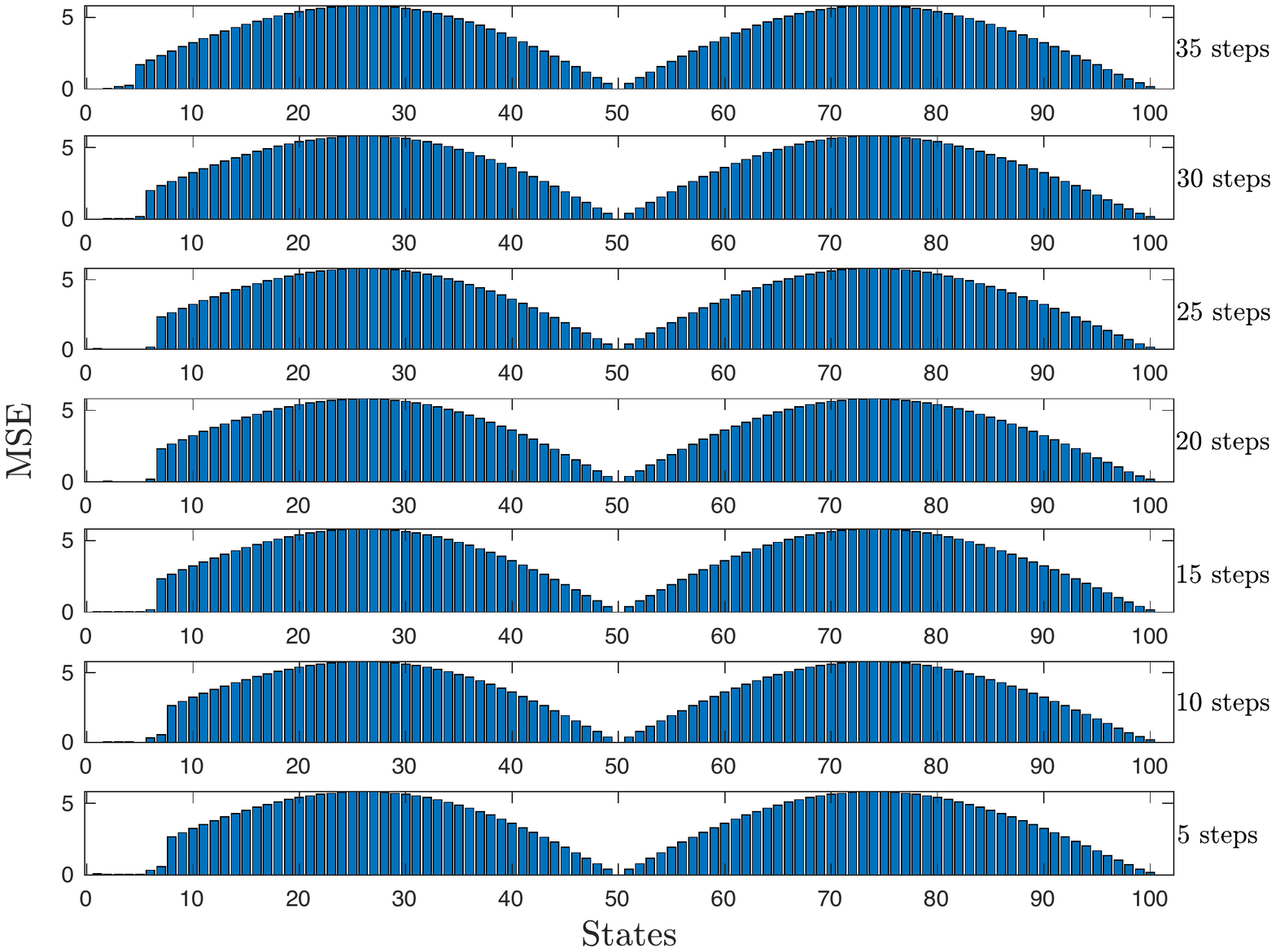}}
\caption{(a) Mean square error of prediction of all the states using Robust DMD approach. (b) Mean square error of prediction of all the states using normal DMD.}\label{burger_mse}
\end{figure}

Fig. \ref{burger_mse}(a) shows the mean square error in prediction using the proposed approach and Fig. \ref{burger_mse}(b) shows the mean square error using normal DMD. It can be clearly seen that errors using the proposed method are much smaller (of the order of $10^2$). Another observation is that normal DMD is not much sensitive to small variations in training data size, whereas the proposed method is more sensitive to training data size.

\section{Conclusions}\label{section_conclusion}

In this paper, we addressed the problem of computation of Koopman operator from sparse time series data. In certain experimental applications, it may not be possible to obtain time series data which is rich enough to approximate the Koopman operator. We propose an algorithm to compute the Koopman operator for such sparse data. The intuition was based on exploiting the differentiability of the system mapping to append artificial data points to the sparse data set and using robust optimization-based techniques to approximate the Koopman eigenspectrum. The efficiency of the proposed method was also demonstrated on three different dynamical systems and the results obtained were compared to existing Dynamic Mode Decomposition and Extended Dynamic Mode Decomposition algorithms to establish the advantage of our proposed algorithm and in the future; we hope to investigate the performance of our approach on real experimental data sets.

\bibliographystyle{IEEEtran}
\bibliography{subhrajit_robust_DMD}

\end{document}